\newtheorem{theorem}[equation]{Theorem}
\newtheorem{lemma}[equation]{Lemma}
\newtheorem{corollary}[equation]{Corollary}
\newtheorem{proposition}[equation]{Proposition}
\newcommand{\A}{{\mathbb A}}
\newcommand{\C}{{\mathbb C}}
\newcommand{\D}{{\mathbb D}}
\newcommand{\E}{{\mathbb E}}
\newcommand{\X}{{\mathbb X}}
\newcommand{\sD}{{\mathcal D}}
\newcommand{\sO}{{\mathcal O}}
\newcommand{\rank}{{\rm rank}}
\numberwithin{equation}{section}
\begin{document}

\title[Newton polytopes and algebraic hypergeometric series]{Newton polytopes and algebraic \\ hypergeometric series}
\author{Alan Adolphson}
\address{Department of Mathematics\\
Oklahoma State University\\
Stillwater, Oklahoma 74078}
\email{adolphs@math.okstate.edu}
\author{Steven Sperber}
\address{School of Mathematics\\
University of Minnesota\\
Minneapolis, Minnesota 55455}
\email{sperber@math.umn.edu}
\address{Department of Mathematics, Princeton University, Princeton, NJ 08544-1000}
\email{nmk@math.princeton.edu}
\date{\today}
\keywords{}
\subjclass{}
\dedicatory{\rm \small with an appendix by Nicholas M. Katz}
\begin{abstract}
Let $X$ be the family of hypersurfaces in the odd-dimen\-sional torus ${\mathbb T}^{2n+1}$ defined by a Laurent polynomial $f$ with fixed exponents and variable coefficients.  We show that if $n\Delta$, the dilation of the Newton polytope $\Delta$ of $f$ by the factor $n$, contains no interior lattice points, then the Picard-Fuchs equation of  $W_{2n}H_{\rm DR}^{2n}(X)$ has a full set of algebraic solutions (where $W_\bullet$ denotes the weight filtration on de Rham cohomology).  We also describe a procedure for finding solutions of these Picard-Fuchs equations.
\end{abstract}
\maketitle

\section{Introduction}

It is a problem of classical interest to determine which hypergeometric series are algebraic functions.  Schwarz\cite{S}  answered this question for the series $_2F_1$, the answer for $_nF_{n-1}$ was given by Beukers-Heckman\cite{BH}.  We refer the reader to Baldassarri-Dwork\cite{BD} for additional results and historical information.  In another direction, Katz\cite{K} showed that for certain differential equations arising from geometry a full set of solutions modulo $p$ for almost all primes $p$ implies a full set of algebraic solutions.  Beukers\cite{Be} applied this result to give a condition for certain nonresonant $A$-hypergeometric series to be algebraic.  

Our interest in this area was stimulated by a result of Rodriguez-Villegas\cite{RV}, who showed that certain series whose coefficients are ratios of factorials are algebraic functions.  Let $\alpha_1,\dots,\alpha_n,\beta_{n+1},\dots,\beta_m$ be a sequence of positive integers satisfying 
\[ \sum_{i=1}^n \alpha_i = \sum_{j=n+1}^m \beta_j, \]
put
\begin{equation}
u_k(\alpha,\beta) =\frac{(\alpha_1k)!\cdots(\alpha_nk)!}{(\beta_{n+1}k)!\cdots(\beta_mk)!},
\end{equation}
and consider the series
\begin{equation}
u(\alpha,\beta;t) = \sum_{k=0}^\infty u_k(\alpha,\beta) t^k.
\end{equation} 
Rodriguez-Villegas showed that (excluding the trivial case where the $\beta_j$ are a permutation of the $\alpha_i$) the series $u(\alpha,\beta;t)$ is an algebraic function if and only if $m=2n+1$ and $u(\alpha,\beta;t)$ has integral coefficients.

From the $A$-hypergeometric point of view these series are resonant so the result of Beukers does not apply directly.  However these series are related to nonresonant $_nF_{n-1}$-hypergeometric series (see Bober\cite[Section~4]{Bo} for the details) so one can apply Beukers-Heckman.

Our approach was motivated by the following observation.  
Let $\Delta(\alpha,\beta)\subseteq {\mathbb R}^m$ be the convex hull of the following set of $m+2$ lattice points: the origin, the standard unit basis vectors, and $(\alpha_1,\dots,\alpha_n,-\beta_{n+1},\dots,-\beta_m)$.  
For a nonnegative integer $k$ and a polytope $\Delta\subseteq{\mathbb R}^m$, we denote by $k\Delta$ the set
\[ k\Delta = \{(kx_1,\dots,kx_m)\in{\mathbb R}^m\mid (x_1,\dots,x_m)\in\Delta\}. \]
It follows easily from a result in \cite{AS5} (see Section 8 for the proof) that the following characterization holds.
\begin{proposition}
The ratios $u_k(\alpha,\beta)$ are integral for all natural numbers~$k$ if and only if the polytope $n\Delta(\alpha,\beta)$ contains no interior lattice points.
\end{proposition}

Thus the series $u(\alpha,\beta;t)$ is an algebraic function if and only if $m=2n+1$ and $n\Delta(\alpha,\beta)$ contains no interior lattice points.  In this paper we apply ideas from toric geometry to explain more directly how the absence of interior lattice points leads to the algebraicity of hypergeometric series.

Consider a more general situation.
Let $B=\{ {\bf b}_1,\dots,{\bf b}_N\}\subseteq {\mathbb Z}^m$ be a finite set of lattice points whose convex hull $\Delta(B)$ is an $m$-dimensional polytope and let
\[ f = \sum_{i=1}^N \lambda_ix^{{\bf b}_i} \in {\mathbb C}(\lambda_1,\dots,\lambda_N)[x_1^{\pm 1},\dots,x_m^{\pm 1}], \]
where $\lambda_1,\dots,\lambda_N$ are indeterminates.
Let ${\mathbb T}^m$ be the $m$-torus over ${\mathbb C}(\lambda)$ and let $X\subseteq{\mathbb T}^m$ be the hypersurface defined by the equation $f = 0$.  Then $X$ is a smooth ${\mathbb C}(\lambda)$-scheme, and we denote by $H^\bullet_{\rm DR}(X)$ its relative de Rham cohomology over~${\mathbb C}(\lambda)$. Denote by ${\mathcal D}$ the ring of differential operators in the $\lambda_i$ with coefficients in~${\mathbb C}(\lambda)$.  Via the Gauss-Manin connection, the cohomology groups $H_{\rm DR}^\bullet(X)$ are modules over the ring~${\mathcal D}$ (by the phrase ``${\mathcal D}$-module'' we always mean a left ${\mathcal D}$-module).  Let $W_\bullet$ denote the weight filtration on de Rham cohomology.  The $W_jH_{\rm DR}^k(X)$ are ${\mathcal D}$-submodules of~$H_{\rm DR}^k(X)$.  

Our focus will be on the top cohomology group $H^{m-1}_{\rm DR}(X)$.  When $m$ is odd, say, $m=2n+1$, we are particularly interested in the case where the Hodge filtration $F^\bullet$ is trivial on $W_{2n}H_{\rm DR}^{2n}(X)$, i.~e., 
\begin{equation}
F^k W_{2n}H_{\rm DR}^{2n}(X) = \begin{cases} W_{2n}H_{\rm DR}^{2n}(X) & \text{if $k\leq n$,} \\ 0 & \text{if $k>n$.} \end{cases}
\end{equation}
The only nonvanishing Hodge number is then the middle one:
\begin{equation}
h^{n,n}\big(W_{2n}H_{\rm DR}^{2n}(X)\big) = \dim_{{\mathbb C}(\lambda)} W_{2n}H_{\rm DR}^{2n}(X).
\end{equation}

The following result is due to N. M. Katz (Theorem 9.1 and Corollary 9.3).
\begin{theorem}
Suppose that $m=2n+1$.  
If the Hodge filtration is trivial on $W_{2n}H_{\rm DR}^{2n}(X)$, then the ${\mathcal D}$-module $W_{2n}H_{\rm DR}^{2n}(X)$ has a full set of solutions that are algebraic functions.
\end{theorem}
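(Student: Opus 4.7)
\emph{Proof plan.} My plan is to deduce the statement from Katz's theorem in \cite{K}, according to which a $\mathcal{D}$-module of geometric origin admits a full set of algebraic solutions whenever its $p$-curvature vanishes modulo $p$ for almost all primes~$p$. Since $W_{2n}H^{2n}_{\mathrm{DR}}(X)$ is by construction a sub-$\mathcal{D}$-module of the Gauss-Manin connection attached to a smooth family of torus hypersurfaces, it is geometric in the required sense, so the task reduces to checking the $p$-curvature vanishing for almost all~$p$.

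The key input would be the assumption that the Hodge filtration on $W_{2n}H^{2n}_{\mathrm{DR}}(X)$ is trivial, meaning the whole module lies in the single Hodge graded piece $\mathrm{gr}_F^n$. By Griffiths transversality the Gauss-Manin connection satisfies $\nabla(F^k) \subseteq F^{k-1} \otimes \Omega^1$, and the obstruction to $\nabla$ preserving each graded piece individually is measured by the Kodaira-Spencer maps $\mathrm{gr}_F^k \to \mathrm{gr}_F^{k-1} \otimes \Omega^1$. When only one graded piece is nonzero, all these Kodaira-Spencer maps vanish trivially. I would then invoke the standard formula expressing the $p$-curvature of a mod-$p$ Gauss-Manin connection as an $\mathcal{O}_S$-linear composition built out of iterated Kodaira-Spencer-type maps together with the Cartier operator; this forces $\psi_p = 0$ for every prime of good reduction. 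An equivalent $F$-crystal viewpoint: the Hodge polygon of $W_{2n}H^{2n}_{\mathrm{DR}}(X)$ is a single segment of slope~$n$, Mazur's inequality then pins the Newton polygon onto the Hodge polygon, so Frobenius acts as $p^n$ times a unit on the associated crystal, and after Tate twisting the resulting unit-root $F$-crystal corresponds to a $\mathcal{D}$-module with vanishing $p$-curvature.

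Combining these two steps produces the required full set of algebraic solutions. I expect the main obstacle to be the integral bookkeeping needed to bridge Hodge theory in characteristic zero and crystalline theory in characteristic~$p$: one has to produce a sufficiently well-behaved model of $W_{2n}H^{2n}_{\mathrm{DR}}(X)$, together with its weight and Hodge filtrations, over an open subscheme of $\mathrm{Spec}\,\mathbb{Z}[\lambda_1,\dots,\lambda_N]$, and then verify that the Mazur/Katz-Ogus style vanishing arguments really apply to the weight-graded piece $W_{2n}$ rather than only to the ambient cohomology $H^{2n}_{\mathrm{DR}}(X)$, on which the Hodge filtration need not be trivial. Once this integral and filtration-theoretic bookkeeping is in place, the chain ``trivial Hodge filtration on $W_{2n}H^{2n}_{\mathrm{DR}}(X)$ $\Rightarrow$ vanishing $p$-curvature mod~$p$ for almost all $p$ $\Rightarrow$ full set of algebraic solutions'' delivers the theorem.
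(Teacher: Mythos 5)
Your plan is essentially the same approach as the paper's appendix (written by Katz himself): both reduce the claim to the main results of \cite{K} on $p$-curvature and the Hodge filtration. The appendix is tighter because, after a Hironaka resolution and spreading-out that realizes $W_{2n}H^{2n}_{\rm DR}(X)$ as the de Rham incarnation of a polarized variation of pure Hodge structure over a finite \'etale cover of a dense open in the $\lambda$-space, it simply cites \cite[Proposition 4.2.1.3]{K}, which already packages exactly the chain you sketch (concentrated Hodge filtration $\Rightarrow$ vanishing Kodaira--Spencer $\Rightarrow$ vanishing $p$-curvature mod $p$ for almost all $p$ $\Rightarrow$ constant on a finite \'etale cover), including the integral-model bookkeeping you correctly flag as the main technical burden.
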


Deligne's theory of mixed Hodge structures\cite{De-HodgeII} has been explicitly described for hypersurfaces in a torus by Batyrev\cite{B}.  Using Batyrev's results, we show in Section~2 that if $n\Delta(B)$ contains no interior lattice points, then the Hodge filtration on $W_{2n}H_{\rm DR}^{2n}(X)$ is trivial (Proposition~2.11 below).  Theorem~1.6 then implies our first main result:
\begin{theorem}
Suppose that $m=2n+1$.  
If $n\Delta(B)$ contains no interior lattice points, then the ${\mathcal D}$-module $W_{2n}H_{\rm DR}^{2n}(X)$ has a full set of solutions that are algebraic functions.
\end{theorem}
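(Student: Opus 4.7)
The statement is essentially a corollary of two results already announced in the introduction: Katz's Theorem~1.6 (the ``trivial Hodge filtration implies algebraic solutions'' principle, proved in the appendix) and Proposition~2.11 (the ``no interior lattice points in $n\Delta(B)$ implies trivial Hodge filtration on $W_{2n}H_{\rm DR}^{2n}(X)$'' statement, to be established in Section~2). Thus my plan is simply: (1) prove Proposition~2.11; (2) observe that the hypothesis of Theorem~1.6 is now met verbatim; (3) invoke Theorem~1.6 to conclude. Steps (2) and (3) are immediate; the content lies in step~(1).

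For step~(1), the strategy is to feed the hypothesis into Batyrev's combinatorial description of the mixed Hodge structure on $H_{\rm DR}^{m-1}$ of a (smooth, nondegenerate) hypersurface in the torus. In Batyrev's setup, the graded pieces of the Hodge filtration on the pure part $W_{m-1}H_{\rm DR}^{m-1}(X)$ are identified, via residues of $(m-1)$-forms on ${\mathbb T}^m\setminus X$ with controlled pole order along $X$, with certain subquotients of the graded ring built from $\bigoplus_{k\ge 1} \Lambda^k$, where $\Lambda^k\subseteq \C[x_1^{\pm 1},\dots,x_m^{\pm 1}]$ is spanned by the monomials $x^{\bf u}$ with ${\bf u}$ an \emph{interior} lattice point of $k\Delta(B)$. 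Concretely, the contribution to $F^{m-1-k}/F^{m-k}$ on the pure weight piece comes from $\Lambda^{k+1}$. In our setting $m=2n+1$, so the piece $F^{n+j}/F^{n+j+1}$ (for $j\ge 0$) is fed by $\Lambda^{n-j}$, i.e.\ by interior lattice points of $(n-j)\Delta(B)$.

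Now comes the core combinatorial step. The hypothesis is that $n\Delta(B)$ has no interior lattice points. Since interior lattice points in $k\Delta(B)$ for $k\le n$ produce, by convex combination with any interior rational point of $n\Delta(B)$ in the higher dilates, interior lattice points in $n\Delta(B)$, one checks that $k\Delta(B)$ has no interior lattice points for all $1\le k\le n$. Consequently $\Lambda^{k}=0$ for $k\le n$, so all the graded pieces $F^{p}/F^{p+1}$ of $W_{2n}H_{\rm DR}^{2n}(X)$ vanish for $p>n$. Combined with the fact (built into Batyrev's picture) that these pieces also vanish trivially for $p>2n$ and, by Hodge symmetry on the pure weight-$2n$ quotient, for $p<n$, this yields exactly the statement of Proposition~2.11: the Hodge filtration collapses onto a single step at $p=n$, so it is trivial.

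The main obstacle is step~(1), and within it the careful bookkeeping of Batyrev's identification: one must verify that the weight filtration really picks out only the pure part (so that the boundary contributions from faces of $\Delta(B)$ are correctly discarded), and that the indexing between ``pole order'' and ``Hodge level'' is the one claimed above. Once these identifications are in place, the lattice-point vanishing argument of the previous paragraph is essentially combinatorial and short, and Theorem~1.6 then finishes the proof with no further work.
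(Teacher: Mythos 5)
Your proposal takes essentially the same route as the paper: Theorem~1.7 is exactly Proposition~2.11 combined with Katz's Theorem~1.6, and your sketch of Proposition~2.11 (no interior lattice points in $k\Delta(B)$ for $1\le k\le n$ kills the upper Hodge graded pieces via Batyrev's identification $F^k/F^{k+1}\cong\bar{H}^{(m-k)}$, then Hodge symmetry on the pure weight-$2n$ piece collapses the filtration to the single middle step) is precisely the paper's argument. There is only a small off-by-one slip in your indexing: with $m=2n+1$ the piece $F^{n+j}/F^{n+j+1}$ is governed by interior lattice points of $(n-j+1)\Delta(B)$, not $(n-j)\Delta(B)$, though the conclusion you draw from it, $F^p/F^{p+1}=0$ for $p>n$, is the correct one.
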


Theorem 1.7 is a straightforward consequence of the results of Katz and Batyrev.  
But in order to apply Theorem 1.7 to prove that specific series represent algebraic functions, we need to study the ${\mathcal D}$-module $W_{2n}H_{\rm DR}^{2n}(X)$ and its solutions.  The main body of our paper is thus devoted to studying the ${\mathcal D}$-module structure on $H^{m-1}_{\rm DR}(X)$ and to describing solutions of the associated Picard-Fuchs equation.  

In Section 2 we recall Batyrev's description of $H^{m-1}_{\rm DR}(X)$ and its ${\mathcal D}$-module structure.  In Sections 3 and~4, we give an ``$A$-hypergeometric'' description of the Picard-Fuchs equation.  Sections 5 and 7 describe a method (adapted from \cite{AS3}) for finding solutions of the Picard-Fuchs equation.  Sections 6 and 8 contain examples of series that, by Theorem~1.7, are algebraic over ${\mathbb C}(\lambda)$.  Section~9 is the appendix, due to Nicholas Katz.

\section{De Rham cohomology of $X$}

Before recalling his results, we note that Batyrev worked over ${\mathbb C}$ while we are working over ${\mathbb C}(\lambda)$.  Batyrev's results depended on the assumption of nondegeneracy of the polynomial defining the hypersurface.  Since the condition of nondegeneracy is generically satisfied, those results remain valid over ${\mathbb C}(\lambda)$.  For notational convenience we put ${\bf L} = {\mathbb C}(\lambda)[x_1^{\pm 1},\dots,x_m^{\pm 1}]$, the coordinate ring of ${\mathbb T}^m$.  Let $g = x_{m+1}f\in {\bf L}[x_{m+1}]$.

For ${\bf b}_j\in B$, set ${\bf a}_j = ({\bf b}_j,1)\in{\mathbb Z}^{m+1}$ and put $A = \{{\bf a}_1,\dots,{\bf a}_N\}\subseteq{\mathbb Z}^{m+1}$.  Let $\Delta_0(A)\subseteq{\mathbb R}^{m+1}$ be the convex hull of $A\cup\{{\bf 0}\}$, and let $C(A)\subseteq{\mathbb R}^{m+1}$ be the real cone generated by $A$.  Let $M={\mathbb Z}^{m+1}\cap C(A)$, the lattice points contained in $C(A)$, and let $M^\circ\subseteq M$ be the interior lattice points of $C(A)$.

Let $S\subseteq{\bf L}[x_{m+1}]$ be the ${\mathbb C}(\lambda)$-algebra generated by monomials $x^u$ with $u\in M$, let $S^+$ be the ideal of $S$ generated by all $x^u$ with $u\in M\setminus{\bf 0}$, and let ${I}$ be the ideal of $S$ generated by monomials $x^u$ with $u\in M^\circ$.  For $i=1,\dots,m+1$, define differential operators $D_i$ on $S$ by the formula
\begin{equation}
D_i = x_i\frac{\partial}{\partial x_i} + x_i\frac{\partial g}{\partial x_i}.
\end{equation}
By \cite[Theorem 7.13]{B}, we have the isomorphism
\begin{equation}
H^m_{\rm DR}({\mathbb T}^m\setminus X)\cong S^+/\sum_{i=1}^{m+1} D_iS^+
\end{equation}
(note that the variable we are denoting by ``$x_{m+1}$'' was denoted by ``$x_0$'' in \cite{B}).

By \cite[Corollary 5.5 and Theorem 7.5]{B}, we see that under the isomorphism (2.2) the image of $H^m_{\rm DR}({\mathbb T}^m)$ in $H^m_{\rm DR}({\mathbb T}^m\setminus X)$ is spanned by $\sum_{i=1}^{m+1} D_i{\mathbb C}(\lambda)$, hence for the primitive part of cohomology we have
\begin{equation}
PH^m_{\rm DR}({\mathbb T}^m\setminus X)\cong S^+/\sum_{i=1}^{m+1} D_iS.
\end{equation}
The Poincar\'e residue map defines an isomorphism between the primitive cohomologies of $X$ and ${\mathbb T}^m\setminus X$ (\cite[Proposition 5.3]{B}), so we also have
\begin{equation}
PH^{m-1}_{\rm DR}(X)\cong S^+/\sum_{i=1}^{m+1} D_iS.
\end{equation}

By \cite[Theorem~8.2]{B} we have
\begin{equation}
W_{m+1}H^m_{\rm DR}({\mathbb T}^m\setminus X)\cong \bigg({I}+\sum_{i=1}^{m+1} D_iS^+\bigg)/ \sum_{i=1}^{m+1} D_iS^+,
\end{equation}
so by (2.3), (2.4), and the second equation of \cite[Proposition 5.3]{B}
\begin{equation}
W_{m-1}PH^{m-1}_{\rm DR}(X)\cong \bigg({I}+\sum_{i=1}^{m+1} D_iS\bigg)/ \sum_{i=1}^{m+1} D_iS.
\end{equation}
The second sentence of \cite[Corollary 3.10]{B} implies that 
\[ W_{m-1}H^{m-1}_{\rm DR}(X) = W_{m-1}PH^{m-1}_{\rm DR}(X), \]
so we finally have
\begin{equation}
W_{m-1}H^{m-1}_{\rm DR}(X)\cong \bigg(I+\sum_{i=1}^{m+1} D_iS\bigg)/ \sum_{i=1}^{m+1} D_iS.
\end{equation}

The derivations $\partial_j$ corresponding to the variables~$\lambda_j$ act on $H^{m-1}_{\rm DR}(X)$ via the Gauss-Manin connection, making this cohomology group into a ${\mathcal D}$-module.  Let 
\[ D_{\lambda_j} = \frac{\partial}{\partial \lambda_j} + x^{{\bf a}_j}. \]
We make $S$ into a ${\mathcal D}$-module by letting $\partial_j$ act by $D_{\lambda_j}$.  Note that the $D_{\lambda_j}$ are stable on $I$ and commute with the $D_i$, so the right-hand side of (2.7) becomes a ${\mathcal D}$-module.
\begin{proposition}
The isomorphisms $(2.4)$ and $(2.7)$ are isomorphisms of ${\mathcal D}$-modules.
\end{proposition}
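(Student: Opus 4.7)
The plan is to reduce both assertions to a calculation at the level of explicit differential form representatives. The Gauss-Manin connection respects the weight filtration on de Rham cohomology, so the $\mathcal{D}$-module structure on $W_{m-1}H^{m-1}_{\rm DR}(X)$ is simply the restriction of that on $PH^{m-1}_{\rm DR}(X)$. Since the paragraph preceding the proposition already observes that each $D_{\lambda_j}$ preserves $I$ and commutes with the $D_i$, it is enough to establish $\mathcal{D}$-equivariance for (2.4); the assertion for (2.7) follows by restriction to the submodule corresponding to $I$.

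For (2.4), I would first unpack Batyrev's isomorphism (2.2) at the level of forms. A lattice point $u = ({\bf b}, k) \in M$ with $k \geq 1$ corresponds to the rational form
\[ \omega_u = (-1)^{k-1}(k-1)!\, \frac{x^{\bf b}}{f^k} \cdot \Omega_0, \qquad \Omega_0 = \frac{dx_1 \wedge \cdots \wedge dx_m}{x_1 \cdots x_m}, \]
viewed as a meromorphic $m$-form on $\mathbb{T}^m$ with poles along $X$. Because the ambient torus does not vary in the family and only the divisor $X$ moves with $\lambda$, the Gauss-Manin connection along $\partial/\partial\lambda_j$ acts on such forms simply by differentiating their coefficient functions with respect to $\lambda_j$. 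A short computation using $\partial(f^{-k})/\partial\lambda_j = -k\, x^{{\bf b}_j} f^{-k-1}$ together with the identity $-k(-1)^{k-1}(k-1)! = (-1)^k k!$ yields $\nabla_{\partial_{\lambda_j}}\omega_u = \omega_{u + {\bf a}_j}$. Extending by $\mathbb{C}(\lambda)$-linearity, the action on $c(\lambda) x^u$ becomes $(\partial c/\partial\lambda_j)\, x^u + c(\lambda)\, x^{u + {\bf a}_j}$, which is precisely $D_{\lambda_j}$. The naturality of the Poincar\'e residue map (Batyrev, Proposition 5.3) transports this identification from $PH^m_{\rm DR}(\mathbb{T}^m\setminus X)$ to $PH^{m-1}_{\rm DR}(X)$.

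The main obstacle I anticipate is purely bookkeeping: one must pin down the sign and factorial normalization in Batyrev's correspondence between $S^+$ and the pole-order filtration so that the identity $\nabla_{\partial_{\lambda_j}}\omega_u = \omega_{u + {\bf a}_j}$ holds cleanly, without stray constants that would conflict with the defining formula $D_{\lambda_j} = \partial/\partial\lambda_j + x^{{\bf a}_j}$. A secondary point, standard but worth noting, is that the Poincar\'e residue is a morphism of variations, so it intertwines the Gauss-Manin connections on ${\mathbb T}^m\setminus X$ and $X$; once the normalization in (2.2) is fixed to be compatible with the $\mathcal{D}$-action on the nose, the required equivariance of (2.4), and hence of (2.7), follows.
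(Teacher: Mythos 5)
Your argument is correct, but it takes a different route from the paper, whose entire proof of this proposition is a one-line citation: ``This result follows from [B, Theorem 11.6] or [AS2, Theorem 1.1].'' You instead reconstruct the compatibility from scratch by computing the Gauss-Manin connection on explicit rational-form representatives. The key observation that powers your computation is sound: because the family sits inside the constant ambient space $\mathbb{T}^m \times \mathbb{A}^N$ with only the divisor moving, the relative $m$-forms are top-degree and hence automatically relatively closed, so Gauss-Manin acts on them by differentiating coefficients in $\lambda$. Your calculation $\nabla_{\partial_{\lambda_j}}\omega_u = \omega_{u+{\bf a}_j}$ under the normalization $\omega_u = (-1)^{k-1}(k-1)!\,x^{\bf b}f^{-k}\Omega_0$ does check out (using $(-1)^{k-1}(k-1)!\cdot(-k)=(-1)^k k!$), and it matches $D_{\lambda_j}$ exactly; the reduction of (2.7) to (2.4) via the weight filtration being a sub-connection and $D_{\lambda_j}(I)\subseteq I$ is also the right step. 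What your approach buys is a self-contained, elementary verification and a concrete picture of how $S^+$ sits inside the de Rham complex; what the citation-based approach buys is brevity and freedom from the sign/factorial bookkeeping you rightly flag as the main risk, since those references already package the $\mathcal{D}$-module compatibility with Batyrev's conventions for (2.2). Your sketch is essentially what one finds on unwinding [AS2, Theorem 1.1], so it would serve well as an expansion of the paper's terse proof, provided the chosen normalization is verified against Batyrev's precise definition of the isomorphism in [B, Theorem 7.13].
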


\begin{proof}
This result follows from \cite[Theorem 11.6]{B} or \cite[Theorem 1.1]{AS2}.
\end{proof}

We define a grading on the ring $S$ by setting for $u=(u_1,\dots,u_{m+1})\in M$ 
\[ \deg x^u = u_{m+1}. \]
Then $g$ is homogeneous of degree 1 and the quotient ring
\[ \bar{S}:= S/(x_1\partial g/\partial x_1,\dots,x_{m+1}\partial g/\partial x_{m+1})  \]
inherits a grading from $S$.  The graded ring $\bar{S}$ is related to the Hodge filtration $F^\bullet$ on $H^{m-1}_{\rm DR}(X)$ by \cite[Corollary~6.10]{B}:
\begin{equation}
 F^k/F^{k+1} PH^{m-1}_{\rm DR}(X) \cong \bar{S}^{(m-k)}\quad\text{for $k=0,1,\dots,m-1$,} 
\end{equation}
where the isomorphism is induced from (2.4).  Define $\bar{H}$ to be the image of ${I}$ in $\bar{S}$ induced by the inclusion $I\hookrightarrow S$.  Then Batyrev shows\cite[Proposition 9.2]{B} that $\bar{H}$ describes the Hodge filtration on $W_{m-1}H^{m-1}_{\rm DR}(X)$:
\begin{equation}
F^k/F^{k+1}\big(W_{m-1}H^{m-1}_{\rm DR}(X)\big)\cong \bar{H}^{(m-k)} \quad\text{for $k=0,1,\dots,m-1$}.
\end{equation}

\begin{proposition}
Suppose that $m=2n+1$.  If $n\Delta(B)$ contains no interior lattice points, then the Hodge filtration on $W_{2n}H^{2n}_{\rm DR}(X)$ is trivial.
\end{proposition}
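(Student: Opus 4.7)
By (2.9), the Hodge graded pieces $F^k/F^{k+1}W_{m-1}H^{m-1}_{\rm DR}(X)$ are identified with $\bar{H}^{(m-k)}$, so when $m=2n+1$ the triviality of the Hodge filtration on $W_{2n}H^{2n}_{\rm DR}(X)$ is equivalent to the vanishing $\bar{H}^{(j)}=0$ for every $j\in\{1,\ldots,2n+1\}\setminus\{n+1\}$. The plan is to split this into two parts: a combinatorial argument using the hypothesis to handle the low range $1\le j\le n$, and an appeal to the Poincar\'e-duality symmetry of the pure weight-$2n$ piece $W_{2n}H^{2n}_{\rm DR}(X)$ to transfer that vanishing to the symmetric high range $n+2\le j\le 2n+1$.

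For the low range I would prove the stronger statement $I^{(j)}=0$, i.e.\ that the cone $C(A)$ has no interior lattice points of height $j\le n$. The key input is the general inclusion $M^\circ+M\subseteq M^\circ$, which holds because the interior of any pointed polyhedral cone is stable under addition of points of the cone. Given $u\in M^\circ$ at height $j\le n$ and any ${\bf a}_i\in A$ (which sits at height $1$), the point $u+(n-j){\bf a}_i$ lies in $M^\circ$ at height exactly $n$; since $C(A)\cap\{x_{m+1}=n\}=n\Delta(B)\times\{n\}$ and the topological interior of the slice coincides with the slice of the interior (using that $\Delta(B)$ is $m$-dimensional, so $C(A)$ is full-dimensional), projecting away the last coordinate yields an interior lattice point of $n\Delta(B)$, contradicting the hypothesis.

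For the high range I would invoke Poincar\'e duality. Since $X$ is smooth of dimension $m-1=2n$, the weight piece $W_{2n}H^{2n}_{\rm DR}(X)$ equals the image of $H^{2n}_c(X)\to H^{2n}(X)$ and carries a pure Hodge structure of weight $2n$ together with a nondegenerate self-pairing; consequently $h^{p,q}=h^{q,p}$ on this piece. Translated via (2.9) this symmetry reads $\dim\bar{H}^{(j)}=\dim\bar{H}^{(2n+2-j)}$. As $j$ runs over $\{n+2,\ldots,2n+1\}$ its partner $2n+2-j$ runs over $\{1,\ldots,n\}$, where $\bar{H}$ already vanishes by the first step. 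Combining the two cases leaves $\bar{H}^{(n+1)}$ as the only possibly nonzero graded piece, which is exactly the triviality of the Hodge filtration on $W_{2n}H^{2n}_{\rm DR}(X)$.

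The main obstacle I anticipate is the clean invocation of the Poincar\'e-duality symmetry on the weight-truncated piece $W_{2n}H^{2n}_{\rm DR}(X)$ in the relative setting over ${\mathbb C}(\lambda)$. I would sidestep this by quoting the corresponding fibrewise symmetry already present in Batyrev's paper (in the vicinity of \cite[Theorem~8.2]{B} and \cite[Corollary~3.10]{B}) and noting that, because $f$ is generically nondegenerate, that symmetry propagates to the relative situation without change.
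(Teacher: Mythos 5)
Your proof is correct and takes essentially the same route as the paper: the combinatorial hypothesis forces $\bar{H}^{(j)}=0$ (equivalently $I^{(j)}=0$) for $j\le n$, and Hodge symmetry of the pure bottom weight piece then kills the graded pieces in the symmetric range. One small simplification worth noting is that the symmetry $h^{p,q}=h^{q,p}$ follows already from purity of $W_{2n}H^{2n}_{\rm DR}(X)$, which the paper obtains directly from $W_{2n-1}H^{2n}_{\rm DR}(X)=0$, so invoking the Poincar\'e-duality self-pairing is unnecessary.
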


\begin{proof}
The hypothesis that $n\Delta(B)$ contains no interior lattice points implies that $\bar{H}^{(i)} = 0$ for $i=1,\dots,n$, hence by (2.10) 
\[ F^{2n+1-k}/F^{2n+2-k} W_{2n}H^{2n}_{\rm DR}(X) = 0\quad\text{for $k=1,\dots,n$.} \]  
Since $F^{2n+1}W_{2n}H^{2n}_{\rm DR}(X) = 0$, we must have
\begin{equation}
F^kW_{2n}H^{2n}_{\rm DR}(X) = 0\quad\text{for $k=n+1,n+2,\dots,2n$.}
\end{equation}
But $W_{2n-1}H^{2n}_{\rm DR}(X)=0$, so Hodge symmetry holds for $W_{2n}H^{2n}_{\rm DR}(X)$.  Since 
\[ F^0W_{2n}H^{2n}_{\rm DR}(X) = W_{2n}H^{2n}_{\rm DR}(X), \]
we conclude from Hodge symmetry and (2.12) that
\begin{equation}
F^kW_{2n}H^{2n}_{\rm DR}(X) = W_{2n}H^{2n}_{\rm DR}(X)\quad\text{for $k=0,1,\dots,n$.}
\end{equation}
Thus the Hodge filtration is trivial on $W_{2n}H^{2n}_{\rm DR}(X)$.
\end{proof}

As noted in Section 1, Proposition 2.11 and Theorem 1.6 immediately imply Theorem~1.7.

{\bf Remark:} If $(n+1)\Delta(B)$ contains no interior lattice points, then $\bar{H}^{(n+1)}=0$ also, so (2.10) implies that (2.12) holds in addition for $k=n$. From (2.13), we conclude that $W_{2n}H^{2n}_{\rm DR}(X) = 0$ in this case.  This leads to the following observation.
\begin{lemma}
Suppose that $m=2n+1$.  
If $W_{2n}H^{2n}_{\rm DR}(X) \neq 0$, then $(n+1)\Delta(B)$ contains an interior lattice point.
\end{lemma}

\section{Picard-Fuchs equations}

In this section, we apply Proposition 2.8 to obtain information about the Picard-Fuchs equation of $W_{m-1}H^{m-1}_{\rm DR}(X)$ and its solutions.  We first recall some general facts.

Let ${\mathcal M}$ be a ${\mathcal D}$-module which is finite-dimensional as a vector space over ${\mathbb C}(\lambda)$.  The classical ``Picard-Fuchs equation'' of ${\mathcal M}$ can be described as the following Pfaffian system.  First choose a basis $\{m_i\}_{i=1}^d$ for ${\mathcal M}$ as ${\mathbb C}(\lambda)$-vector space.  For $k=1,\dots,N$, each $\partial_k(m_i)$ can be written as a linear combination of the $\{m_j\}_{j=1}^d$ with coefficients in ${\mathbb C}(\lambda)$: 
\[ \partial_k(m_i) = \sum_{j=1}^d C^{(k)}_{ij}(\lambda)m_j. \]
Let $C^{(k)}(\lambda) = \big(C^{(k)}_{ij}(\lambda)\big)_{i,j=1}^d$ be the corresponding $(d\times d)$-matrix of rational functions and let $Y$ be the column vector with entries $y_1,\dots,y_d$.  The associated Pfaffian system of differential equations is then
\begin{equation}
\partial_k Y = C^{(k)}(\lambda)Y \quad\text{for $k=1,\dots,N$.}
\end{equation}

The solutions of (3.1) in an arbitrary ${\mathcal D}$-module ${\mathcal F}$ are related to ${\rm Hom}_{\mathcal D}({\mathcal M},{\mathcal F})$ in the following way.  Let $\phi\in{\rm Hom}_{{\mathbb C}(\lambda)}({\mathcal M},{\mathcal F})$ be a vector-space homomorphism.  Then $\phi\in{\rm Hom}_{\mathcal D}({\mathcal M},{\mathcal F})$ if and only if the column vector with entries $\phi(m_1),\dots,\phi(m_d)$ is a solution of the system (3.1).
To say that ${\mathcal M}$ has a full set of algebraic solutions means that all $\phi(m_i)$ are algebraic over ${\mathbb C}(\lambda)$.

Proposition 2.8 shows that $PH^{m-1}_{\rm DR}(X)$ and $W_{m-1}H^{m-1}_{\rm DR}(X)$ are ${\mathcal D}$-submodules of the ${\mathcal D}$-module ${\mathcal W}:=S/\sum_{i=1}^{m+1} D_iS$.  If ${\mathcal F}$ is an arbitrary ${\mathcal D}$-module, we get by restriction maps
\[ {\rm res}:{\rm Hom}_{\mathcal D}({\mathcal W},{\mathcal F})\to \begin{cases} {\rm Hom}_{\mathcal D}(PH^{m-1}_{\rm DR}(X),{\mathcal F}) & \\ {\rm Hom}_{\mathcal D}(W_{m-1}H^{m-1}_{\rm DR}(X),{\mathcal F}). \end{cases} \]
We first describe the elements of ${\rm Hom}_{\mathcal D}({\mathcal W},{\mathcal F})$, which by restriction will then give solutions of the ${\mathcal D}$-modules $PH^{m-1}_{\rm DR}(X)$ and $W_{m-1}H^{m-1}_{\rm DR}(X)$. 

As a  ${\mathbb C}(\lambda)$-vector space, $S$ has basis~$\{x^u\}_{u\in M}$.
Define a ${\mathbb C}(\lambda)$-vector space
\[ R'({\mathcal F}) := \bigg\{ \sum_{u\in M} A_ux^{-u}\mid A_u\in{\mathcal F}\bigg\}. \]
There is a pairing $R'({\mathcal F})\times S\to{\mathcal F}$ defined by
\[ \bigg\langle \sum_{u\in M} A_ux^{-u},\sum_{u\in M} B_ux^u\bigg\rangle = \sum_{u\in M} B_uA_u, \]
where the $B_u$ lie in ${\mathbb C}(\lambda)$ and the sum on the right-hand side is finite because the second sum on the left-hand side is.  This pairing defines an isomorphism
\begin{equation}
{\rm Hom}_{{\mathbb C}(\lambda)}(S,{\mathcal F})\cong R'({\mathcal F}),
\end{equation}
explicitly,
\[ {\rm Hom}_{{\mathbb C}(\lambda)}(S,{\mathcal F})\ni\phi\leftrightarrow \sum_{u\in M} \phi(x^u)x^{-u}\in R'({\mathcal F}). \]

The next step is to determine which elements of $R'({\mathcal F})$ correspond to elements of ${\rm Hom}_{\mathcal D}(S,{\mathcal F})$ under the identification (3.2).  The condition to be satisfied is that for $j=1,\dots,N$ and all $u\in M$
\begin{align*}
 \bigg\langle \sum_{v\in M} A_vx^{-v},D_{\lambda_j}(x^u)\bigg\rangle &= \partial_j\bigg\langle \sum_{v\in M}A_vx^{-v},x^u\bigg\rangle \\
&= \partial_j(A_u).
\end{align*}
But $D_{\lambda_j}(x^u) = x^{u+{\bf a}_j}$, so the left-hand side is just $A_{u+{\bf a}_j}$.  Put
\[ R^*({\mathcal F}) = \bigg\{ \sum_{u\in M} A_ux^{-u}\mid \text{$A_u\in{\mathcal F}$ and $\partial_j(A_u) = A_{u+{\bf a}_j}$ for all $u,j$}\bigg\}. \]
Then we have shown that
\begin{equation}
{\rm Hom}_{\mathcal D}(S,{\mathcal F})\cong R^*({\mathcal F}).
\end{equation}

Equation (3.3) implies that the set ${\rm Hom}_{\mathcal D}({\mathcal W},{\mathcal F})$ can be identified with the elements of $R^*({\mathcal F})$ that annihilate $\sum_{i=1}^{m+1} D_iS$ under the pairing.  Let
\[ \xi = \sum_{u\in M}A_ux^{-u}\in R^*({\mathcal F}). \]
Then $\xi$ vanishes on $\sum_{i=1}^{m+1} D_iS$ if and only if it vanishes on $D_i(x^v)$ for all $i=1,\dots,m+1$ and all $v\in M$.  So the condition to be satisfied is (where we write ${\bf a}_j = (a_{1j},a_{2j},\dots,a_{m+1,j})$)
\begin{align*}
\langle\xi,D_i(x^v)\rangle &= \bigg\langle \xi,v_ix^v + \sum_{j=1}^N a_{ij}\lambda_jx^{v+{\bf a}_j}\bigg\rangle \\
 & = v_iA_v + \sum_{j=1}^N a_{ij}\lambda_jA_{v+{\bf a}_j} \\
 & = 0.
\end{align*}
But since $\xi\in R^*({\mathcal F})$ we have $A_{v+{\bf a}_j} = \partial_j A_v$, so this condition becomes
\[ \bigg(\sum_{j=1}^N a_{ij}\lambda_j\partial_j\bigg)A_v = -v_iA_v \quad\text{for all $v\in M$ and $i=1,\dots,m+1$.} \]
To simplify notation, we write this condition in vector format:
\begin{equation}
\bigg(\sum_{j=1}^N {\bf a}_j\lambda_j\partial_j\bigg)A_v = -vA_v \quad\text{for all $v\in M$.}
\end{equation}
Let
\begin{multline*}
 {\mathcal K}({\mathcal F}) = \bigg\{ \sum_{u\in M}A_ux^{-u}\mid A_u\in{\mathcal F},\; \partial_i(A_u) = A_{u+{\bf a}_i} \,\text{for}\, i=1,\dots,N,\\ \text{and}\; \bigg(\sum_{j=1}^N {\bf a}_j\lambda_j\partial_j\bigg)(A_u) = -uA_u\;\text{for all $u\in M$}\bigg\}. 
\end{multline*}
Then we have proved that there is an isomorphism
\begin{equation}
{\rm Hom}_{\mathcal D}({\mathcal W},{\mathcal F})\cong {\mathcal K}({\mathcal F}).
\end{equation}

\begin{proposition}
Suppose that $\sum_{u\in M} A_ux^{-u}\in{\mathcal K}({\mathcal F})$.  Then 
\[ \sum_{u\in M^\circ} A_ux^{-u}\in{\rm Hom}_{\mathcal D}(W_{m-1}H^{m-1}_{\rm DR}(X),{\mathcal F}). \]
\end{proposition}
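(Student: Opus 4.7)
The plan is to obtain the desired homomorphism $\Psi$ by restricting to $W_{m-1}H^{m-1}_{\rm DR}(X)$ the $\mathcal{D}$-linear map $\Phi\colon\mathcal{W}\to\mathcal{F}$ that (3.5) already attaches to the hypothesis $\xi:=\sum_{u\in M}A_ux^{-u}\in\mathcal{K}(\mathcal{F})$. Since $W_{m-1}H^{m-1}_{\rm DR}(X)$ sits inside $\mathcal{W}$ as a $\mathcal{D}$-submodule via the isomorphism (2.7) together with Proposition~2.8, the restriction $\Psi$ is automatically $\mathcal{D}$-linear; the only work is to recognize that it agrees with pairing against the truncation $\xi':=\sum_{u\in M^\circ}A_ux^{-u}$.

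Concretely, $\Phi$ sends the class of $s\in S$ to $\langle\xi,s\rangle$, so for $h\in I$ one has $\Psi([h])=\langle\xi,h\rangle$. To match this with $\xi'$, I would use the cone observation $M+M^\circ\subseteq M^\circ$: if $u$ is interior to $C(A)$ and $v\in C(A)$, then a ball $B(u,\epsilon)\subseteq C(A)$ translates to $B(u+v,\epsilon)\subseteq C(A)+C(A)\subseteq C(A)$, placing $u+v$ in the interior. Consequently the ideal $I$ coincides with the $\mathbb{C}(\lambda)$-linear span of $\{x^u:u\in M^\circ\}$, and for any $h=\sum_{u\in M^\circ}B_ux^u\in I$,
\[
\langle\xi,h\rangle\;=\;\sum_{u\in M^\circ}B_uA_u\;=\;\langle\xi',h\rangle.
\]
Thus $\Psi([h])=\langle\xi',h\rangle$, which is exactly what it means for $\xi'$ to lie in ${\rm Hom}_{\mathcal D}(W_{m-1}H^{m-1}_{\rm DR}(X),\mathcal{F})$ under the pairing identification.

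The one nontrivial ingredient is the cone inclusion $M+M^\circ\subseteq M^\circ$, and even that is a two-line topological observation; everything else is just restriction of an already-constructed $\mathcal{D}$-linear map together with a direct bookkeeping computation. I do not anticipate any genuine obstacle.
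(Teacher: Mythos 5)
Your proof is correct and follows the same approach as the paper: restrict the $\mathcal{D}$-linear map on $\mathcal{W}$ attached to $\xi$ via (3.5) to the $\mathcal{D}$-submodule $W_{m-1}H^{m-1}_{\rm DR}(X)$ identified by Proposition 2.8 and (2.7). You merely make explicit the bookkeeping (via $M+M^\circ\subseteq M^\circ$, showing $I$ is spanned over $\mathbb{C}(\lambda)$ by $\{x^u:u\in M^\circ\}$) that the paper leaves implicit in its one-line proof.
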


\begin{proof}
This follows immediately from (3.5) and Proposition 2.8: $\sum_{u\in M^\circ} A_ux^{-u}$ is just the restriction of $\sum_{u\in M} A_ux^{-u}\in{\rm Hom}_{\mathcal D}({\mathcal W},{\mathcal F})$ to $W_{m-1}H^{m-1}_{\rm DR}(X)$.
\end{proof}

\begin{corollary}
Suppose that $m=2n+1$.  If $\sum_{u\in M} A_ux^{-u}\in{\mathcal K}({\mathcal F})$ and the polytope $n\Delta(B)$ contains no interior lattice points, then the $A_u$ for $u\in M^\circ$ are algebraic over ${\mathbb C}(\lambda)$.
\end{corollary}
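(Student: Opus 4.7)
The plan is to combine Proposition 3.6 with Theorem 1.7 via the solution/Hom dictionary set up at the start of Section 3. Given $\xi = \sum_{u \in M} A_u x^{-u} \in \mathcal{K}(\mathcal{F})$, Proposition 3.6 already identifies its truncation $\xi^{\circ} = \sum_{u \in M^\circ} A_u x^{-u}$ with a $\mathcal{D}$-module homomorphism $\phi \colon W_{m-1}H^{m-1}_{\mathrm{DR}}(X) \to \mathcal{F}$ satisfying $\phi([x^u]) = A_u$ for each $u \in M^\circ$. So the task reduces to showing that every such $\phi$ takes algebraic values.

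Next I will realize the family $\{A_u\}_{u \in M^\circ}$ as a solution of the Picard-Fuchs system attached to $W_{m-1}H^{m-1}_{\mathrm{DR}}(X)$. By the isomorphism (2.7) this cohomology is a finite-dimensional quotient of the ideal $I$; since an interior point of the convex cone $C(A)$ remains interior after adding any element of $M$, one has $M^\circ + M \subseteq M^\circ$, so $I$ is the $\mathbb{C}(\lambda)$-linear span of $\{x^u : u \in M^\circ\}$. I can therefore extract $u_1,\dots,u_d \in M^\circ$ whose classes form a basis $\{[x^{u_1}],\dots,[x^{u_d}]\}$ of $W_{m-1}H^{m-1}_{\mathrm{DR}}(X)$. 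With respect to this basis, the correspondence stated after (3.1) identifies the column vector with entries $\phi([x^{u_i}]) = A_{u_i}$ as a solution of the associated Pfaffian system.

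To conclude, I will invoke Theorem 1.7: since $m = 2n+1$ and $n\Delta(B)$ contains no interior lattice points, the Picard-Fuchs system for $W_{2n}H^{2n}_{\mathrm{DR}}(X)$ admits a full set of algebraic solutions, whence its Picard-Vessiot extension is algebraic over $\mathbb{C}(\lambda)$ and \emph{every} solution of the system is algebraic. In particular $A_{u_1},\dots,A_{u_d}$ are algebraic over $\mathbb{C}(\lambda)$. For an arbitrary $u \in M^\circ$, expanding $[x^u] = \sum_{i=1}^d c_i(\lambda)[x^{u_i}]$ with $c_i(\lambda) \in \mathbb{C}(\lambda)$ and applying $\phi$ gives $A_u = \sum_{i=1}^d c_i(\lambda) A_{u_i}$, which is algebraic as a $\mathbb{C}(\lambda)$-linear combination of algebraic elements.

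The only point requiring attention is the extraction of a basis from the generating set $\{[x^u]\}_{u \in M^\circ}$, which rests on the elementary cone-closure $M^\circ + M \subseteq M^\circ$. A slightly more subtle conceptual point is the passage from ``a full set of algebraic solutions'' to ``every solution is algebraic''; this is automatic because a $d$-dimensional solution space spanned by algebraic functions has all its elements algebraic over $\mathbb{C}(\lambda)$. Everything else is bookkeeping within the framework already erected in Sections 2 and 3.
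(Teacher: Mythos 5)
Your proof is correct and takes essentially the same approach as the paper's: reduce to the Pfaffian system via Proposition~3.6 and the Hom-solution dictionary, then invoke Theorem~1.7. The only cosmetic difference is that the paper fixes $v\in M^\circ$ with $A_v\neq 0$ and extends $[x^v]$ to a basis of $W_{m-1}H^{m-1}_{\rm DR}(X)$ so that $A_v$ appears directly as an entry of the solution vector, whereas you fix one monomial basis $\{[x^{u_i}]\}$ drawn from $M^\circ$ and recover a general $A_u$ as a $\mathbb{C}(\lambda)$-linear combination of the $A_{u_i}$; both steps rest on the same observation that the classes $\{[x^u]\}_{u\in M^\circ}$ span $W_{m-1}H^{m-1}_{\rm DR}(X)$.
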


\begin{proof}
Fix $v\in M^\circ$.  If $A_v\neq 0$, then $x^v\not\in I\cap\sum_{i=1}^{m+1} D_iS$, so one can find a basis for $W_{m-1}H^{m-1}_{\rm DR}(X)$ (under the identification (2.7)) that contains $x^v$.  By the general remarks at the beginning of this section, $A_v$ will appear in the solution vector of the Picard-Fuchs equation of $W_{m-1}H^{m-1}_{\rm DR}(X)$ corresponding to $\sum_{u\in M^\circ} A_ux^{-u}$.  The assertion of the corollary now follows from Theorem~1.7.
\end{proof}

{\bf Remark.}  Spaces of the type ${\mathcal K}({\mathcal F})$ were first introduced by Dwork\cite{D}.  Roughly speaking, he considered (a $p$-adic analogue of) the space ${\mathcal K} := {\rm Hom}_{{\mathbb C}(\lambda)}({\mathcal W},{\mathbb C}(\lambda))$.  One then has 
\[ {\mathcal K}\bigotimes_{{\mathbb C}(\lambda)} {\mathcal F} \cong {\rm Hom}_{{\mathbb C}(\lambda)}({\mathcal W},{\mathcal F}). \]
Dwork described ${\rm Hom}_{{\mathcal D}}({\mathcal W},{\mathcal F})$ as a subset of ${\mathcal K}\bigotimes_{{\mathbb C}(\lambda)} {\mathcal F}$.  

\section{The $A$-hypergeometric nature of ${\mathcal K}({\mathcal F})$}

We recall the definition of the $A$-hypergeometric system with parameter $\beta\in{\mathbb C}^{m+1}$.
Let $L$ be the lattice of relations on the set $A$:
\[ L = \bigg\{ l=(l_1,\dots,l_N)\in{\mathbb Z}^N\mid \sum_{j=1}^N l_j{\bf a}_j = {\bf 0}\bigg\}. \]
For $l\in L$, define the box operator $\Box_l$ by
\begin{equation} \Box_l = \prod_{l_j>0} \bigg(\frac{\partial}{\partial\lambda_j}\bigg)^{l_j} -  \prod_{l_j<0} \bigg(\frac{\partial}{\partial\lambda_j}\bigg)^{-l_j}. 
\end{equation}

Let $\beta = (\beta_1,\dots,\beta_{m+1})\in{\mathbb C}^{m+1}$.  Associated to $\beta$ and the set $A$ are the Euler or homogeneity operators, written in vector format as
\begin{equation}
 \sum_{j=1}^N {\bf a}_j\lambda_j\partial_j - \beta. 
\end{equation}
The {\it $A$-hypergeometric system with parameter $\beta$\/} is the system consisting of the box operators (4.1) for $l\in L$ and the Euler operators (4.2).  

\begin{proposition}
Let $\sum_{u\in M} A_ux^{-u}\in{\mathcal K}({\mathcal F})$.  Then $A_u$ satisfies the $A$-hyper\-geomet\-ric system with parameter $\beta = -u$.
\end{proposition}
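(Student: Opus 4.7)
The plan is to verify directly that the two families of differential relations defining the $A$-hypergeometric system with parameter $\beta=-u$ both follow from the two defining conditions of $\mathcal{K}(\mathcal{F})$.

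First, the Euler operators. The second condition in the definition of $\mathcal{K}(\mathcal{F})$ reads
\[ \bigg(\sum_{j=1}^N \mathbf{a}_j\lambda_j\partial_j\bigg)A_u = -uA_u, \]
which is precisely the statement that the operator $\sum_{j=1}^N \mathbf{a}_j\lambda_j\partial_j - \beta$ annihilates $A_u$ when $\beta=-u$. So this part is an immediate translation of the definition.

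Second, the box operators. The first defining condition $\partial_j(A_u)=A_{u+\mathbf{a}_j}$ together with the commutativity of the partial derivatives $\partial_j$ gives by induction, for any nonnegative integers $k_1,\dots,k_N$,
\[ \prod_{j=1}^N \partial_j^{k_j}(A_u) = A_{u+\sum_{j=1}^N k_j\mathbf{a}_j}, \]
where the subscript remains in $M$ because $M$ is closed under addition of elements of $A$. Given $l\in L$, split the indices into $\{j:l_j>0\}$ and $\{j:l_j<0\}$ and apply the displayed identity to each part:
\[ \prod_{l_j>0}\partial_j^{l_j}(A_u) = A_{u+\sum_{l_j>0}l_j\mathbf{a}_j}, \qquad \prod_{l_j<0}\partial_j^{-l_j}(A_u) = A_{u-\sum_{l_j<0}l_j\mathbf{a}_j}. \]
The relation $\sum_{j=1}^N l_j\mathbf{a}_j=0$ forces $\sum_{l_j>0}l_j\mathbf{a}_j = -\sum_{l_j<0}l_j\mathbf{a}_j$, so the two subscripts agree and $\Box_l(A_u)=0$.

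There is essentially no obstacle here; the proof is a mechanical unpacking of the definition of $\mathcal{K}(\mathcal{F})$. The only point meriting a brief remark is the observation that the shifted indices $u+\sum_{j}k_j\mathbf{a}_j$ stay inside $M$, which holds because $M = \mathbb{Z}^{m+1}\cap C(A)$ is closed under nonnegative integer combinations with the $\mathbf{a}_j$, so all the $A$-values invoked are legitimately defined.
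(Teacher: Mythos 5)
Your proof is correct and follows essentially the same route as the paper: the Euler condition is read off directly from the definition of $\mathcal{K}(\mathcal{F})$, and the box operators are handled by iterating $\partial_j(A_u)=A_{u+\mathbf{a}_j}$ over the positive and negative parts of $l$ and using $\sum_j l_j\mathbf{a}_j=0$ to cancel. The extra remark that the shifted indices stay in $M$ is a harmless (and correct) observation that the paper leaves implicit.
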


\begin{proof}
It follows immediately from the definition of ${\mathcal K}({\mathcal F})$ that each $A_u$ satisfies the Euler operators with parameter $\beta = -u$.

The condition that $\partial_j(A_u) = A_{u+{\bf a}_j}$ implies that
\begin{equation}
\Box_l(A_u) = A_{u+ \sum_{l_j>0} l_j{\bf a}_j} - A_{u-\sum_{l_j<0} l_j{\bf a}_j}. 
\end{equation}
But $l\in L$ implies that $\sum_{l_j>0} l_j{\bf a}_j = -\sum_{l_j<0} l_j{\bf a}_j$, so the right-hand side of (4.4) vanishes.  It follows that if $\sum_{u\in M} A_ux^{-u}\in R^*({\mathcal F})$, then $A_u$ satisfies $\Box_l$ for all $u\in M$ and $l\in L$.
\end{proof}

\section{Explicit solutions}

Our scheme for obtaining algebraic solutions of $W_{2n}H^{2n}_{\rm DR}(X)$ will be to construct elements of ${\mathcal K}({\mathcal F})$ and apply Corollary~3.7.  We start by recalling the procedure of~\cite{AS3}, which constructs elements of ${\mathcal K}({\mathcal F})$ for certain ${\mathcal D}$-modules ${\mathcal F}$ whose elements are formal Laurent series multiplied by complex powers of the variables.

For $z\in{\mathbb C}$ and $k\in{\mathbb Z}$, $k<-z$ if $z\in{\mathbb Z}_{<0}$, define
\[ [z]_k = \begin{cases} 1 & \text{if $k=0$,} \\ \displaystyle \frac{1}{(z+1)(z+2)\cdots(z+k)} & \text{if $k>0$,} \\
z(z-1)\cdots(z+k+1) & \text{if $k<0$.} \end{cases} \]
For $z=(z_1,\dots,z_N)\in{\mathbb C}^N$ and $k=(k_1,\dots,k_N)\in{\mathbb Z}^N$, we define, assuming for every $i$ that $k_i<-z_i$ if $z_i\in{\mathbb Z}_{<0}$,
\[ [z]_k = \prod_{i=1}^N [z_i]_{k_i}. \]

Let $\beta\in{\mathbb Z}^{m+1}$ and fix $v=(v_1,\dots,v_N)\in\big({\mathbb C}\setminus{\mathbb Z}_{<0}\big)^N$ such that
\begin{equation}
\sum_{i=1}^N v_i{\bf a}_i = \beta. 
\end{equation}
Let $T_1,\dots,T_{m+1}$ be indeterminates.  If ${\bf c} = (c_1,\dots,c_{m+1})\in{\mathbb C}^{m+1}$, we write $T^c = T_1^{c_1}\cdots T_{m+1}^{c_{m+1}}$.  Form the generating series
\[ \Phi_{v_i}(\lambda_i,T) = \sum_{k_i\in{\mathbb Z}} [v_i]_{k_i}\lambda_i^{v_i+k_i}T^{(v_i+k_i){\bf a}_i} \]
and their product 
\[ \Phi_v(\lambda,T) = \prod_{i=1}^N \Phi_{v_i}(\lambda_i,T)  = \sum_{u\in{\mathbb Z}^{m+1}} \Phi_{v,u}(\lambda)T^{\beta+u}, \]
where
\begin{equation}
\Phi_{v,u}(\lambda) = \sum_{\sum k_i{\bf a}_i = u} [v]_k\lambda^{v+k}
\end{equation}
and we write $k=(k_1,\dots,k_N)$ and $\lambda^{v+k} = \lambda_1^{v_1+k_1}\cdots\lambda_N^{v_N+k_N}$.  

By \cite[Equation (2.12)]{AS3} (or a straightforward calculation from the definition) we have 
\[ \partial_j \Phi_v(\lambda,T) = T^{{\bf a}_j}\Phi_v(\lambda,T), \]
i.~e., 
\begin{equation}
\partial_j\Phi_{v,u}(\lambda) = \Phi_{v,u-{\bf a}_j}(\lambda).
\end{equation}
When $\sum_{i=1}^N k_i{\bf a}_i = u$, we have
\[ \sum_{i=1}^N (v_i+k_i){\bf a}_i = \beta+u. \]
The Euler operator $\sum_{i=1}^N {\bf a}_i\lambda_i\partial_i - (\beta+u)$ thus annihilates every monomial $\lambda^{v+k}$ on the right-hand side of (5.2), so it annihilates $\Phi_{v,u}(\lambda)$.   

For $u\in M$, set 
\begin{equation}
A_u = \Phi_{v,-\beta-u}(\lambda) = \sum_{\sum k_i{\bf a}_i = -\beta-u} [v]_k\lambda^{v+k}.  
\end{equation}
Then $A_u$ satisfies the Euler operator $\sum_{i=1}^N {\bf a}_i\lambda_i\partial_i + u$, and by (5.3) we have $\partial_j A_u = A_{u+{\bf a}_j}$.  This implies that (4.4) holds, so the proof of Proposition 4.3 shows that the $A_u$ satisfy the box operators $\Box_l$ for $l\in L$.  We have proved the following proposition.

\begin{proposition}
Let $A_u$ be given by (5.4) for $u\in M$.    If all $A_u$ lie in some ${\mathcal D}$-module ${\mathcal F}$, then the series $\sum_{u\in M} A_ux^{-u}$ lies in ${\mathcal K}({\mathcal F})$.  
\end{proposition}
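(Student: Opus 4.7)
The plan is to verify the three defining conditions of ${\mathcal K}({\mathcal F})$ for the series $\sum_{u\in M} A_u x^{-u}$, where $A_u$ is given by (5.4). Membership $A_u\in{\mathcal F}$ is precisely the standing hypothesis, so the work reduces to checking (a) the recursion $\partial_j(A_u) = A_{u+{\bf a}_j}$ for $j=1,\dots,N$, and (b) the Euler-type identity $\bigl(\sum_{j=1}^N {\bf a}_j\lambda_j\partial_j\bigr)(A_u) = -uA_u$ for every $u\in M$.

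For (a), I would invoke (5.3), which reads $\partial_j\Phi_{v,w}(\lambda) = \Phi_{v,w-{\bf a}_j}(\lambda)$ for any index $w\in{\mathbb Z}^{m+1}$. Specializing $w=-\beta-u$ and unwinding the definition (5.4) gives $\partial_j A_u = \Phi_{v,-\beta-u-{\bf a}_j} = A_{u+{\bf a}_j}$, which is the recursion.

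For (b), I would argue termwise on the series (5.4). Every monomial $\lambda^{v+k}$ in $A_u$ is indexed by a $k$ with $\sum_i k_i{\bf a}_i = -\beta-u$; combining this with (5.1), namely $\sum_i v_i{\bf a}_i = \beta$, yields $\sum_i (v_i+k_i){\bf a}_i = -u$. Since $\sum_j {\bf a}_j\lambda_j\partial_j$ is diagonal on monomials and acts on $\lambda^{v+k}$ as multiplication by $\sum_j(v_j+k_j){\bf a}_j = -u$, each term of $A_u$ is an eigenvector of the Euler operator with eigenvalue $-u$, so the identity propagates to $A_u$ itself.

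The main (and essentially only) subtlety is legitimizing these termwise manipulations: the sums in (5.4) are typically infinite, so one needs $\partial_j$ to commute with the summation in whatever sense elements of ${\mathcal F}$ are realized. This is encoded in the proposition's hypothesis that the $A_u$ are genuine elements of the ${\mathcal D}$-module ${\mathcal F}$ (not merely formal expressions), together with the fact that $\Phi_{v,u}$ is constructed precisely so that $\partial_j$ acts as the index shift $u\mapsto u-{\bf a}_j$. Given that, the proof is little more than a transcription of the calculations already assembled in the paragraph preceding the statement, matched against the three bullet points in the definition of ${\mathcal K}({\mathcal F})$.
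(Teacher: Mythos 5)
Your proof is correct and follows essentially the same route as the paper: the paper's ``proof'' of Proposition 5.5 is exactly the paragraph preceding it, which invokes (5.3) for the shift relation $\partial_j A_u = A_{u+{\bf a}_j}$ and observes that each monomial $\lambda^{v+k}$ in (5.4) is an eigenvector of $\sum_j {\bf a}_j\lambda_j\partial_j$ with eigenvalue $\sum_j(v_j+k_j){\bf a}_j = -u$, just as you do. Your closing remark about the hypothesis $A_u\in{\mathcal F}$ being what legitimizes the termwise manipulations is a reasonable gloss on the same point.
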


Applying Corollary 3.7 then gives the following result.
\begin{corollary}
Suppose that $m=2n+1$ and that the $A_u$ given by~(5.4) all lie in some ${\mathcal D}$-module ${\mathcal F}$.   
If $n\Delta(B)$ contains no interior lattice points, then $A_u$ is algebraic over ${\mathbb C}(\lambda)$ for $u\in M^\circ$.
\end{corollary}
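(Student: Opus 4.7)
The proof is essentially a direct chaining of the two most recently established results together with the main theorem from Section~1. The plan is as follows.

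First I would invoke Proposition~5.5 under the standing hypothesis that every $A_u$ from~(5.4) lies in the fixed ${\mathcal D}$-module ${\mathcal F}$. This immediately yields that the formal sum $\sum_{u\in M} A_u x^{-u}$ is an element of ${\mathcal K}({\mathcal F})$; nothing further needs to be checked because the three defining conditions for ${\mathcal K}({\mathcal F})$ (membership in ${\mathcal F}$, the relation $\partial_j A_u = A_{u+{\bf a}_j}$ coming from (5.3), and the Euler identity coming from annihilation by $\sum_i {\bf a}_i\lambda_i\partial_i + u$) have all been verified in Section~5 in preparation for Proposition~5.5.

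Next, with $\sum_{u\in M} A_u x^{-u}\in{\mathcal K}({\mathcal F})$ in hand and the hypothesis that $m=2n+1$ and $n\Delta(B)$ contains no interior lattice points, I would apply Corollary~3.7 verbatim; this delivers the algebraicity over ${\mathbb C}(\lambda)$ of $A_u$ for every $u\in M^\circ$, which is exactly what the statement asserts.

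In other words, the entire content of Corollary~5.6 is the observation that the constructions of Section~5 feed the ${\mathcal K}({\mathcal F})$-machinery of Section~3. There is no genuine obstacle in the proof itself: the substantive work was done earlier, in Corollary~3.7 (which in turn rests on Theorem~1.7, and hence on Katz's Theorem~1.6 together with Batyrev's triviality of the Hodge filtration, Proposition~2.11) and in the direct computation behind Proposition~5.5. The only point one might want to emphasize, for the reader, is why the hypothesis ``all $A_u$ lie in ${\mathcal F}$'' is nontrivial: the series defining $A_u$ in (5.4) are formal, and different choices of $v$ and different ${\mathcal F}$ (formal power series with fractional exponents, convergent Puiseux series, etc.) will give different realizations; the algebraicity conclusion holds universally in ${\mathbb C}(\lambda)^{\rm alg}$ regardless of which such ${\mathcal F}$ one uses to make sense of the construction, since Corollary~3.7 takes place inside $W_{2n}H^{2n}_{\rm DR}(X)$ and appeals only to Theorem~1.7.
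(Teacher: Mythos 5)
Your proof is correct and follows exactly the paper's own argument: apply Proposition~5.5 to place $\sum_{u\in M}A_u x^{-u}$ in ${\mathcal K}({\mathcal F})$, then invoke Corollary~3.7. Your closing remark about the role of the hypothesis on ${\mathcal F}$ is a helpful clarification but adds nothing to the logical content.
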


Whether there exists a ${\mathcal D}$-module ${\mathcal F}$ containing all the series $A_u$ depends on the choice of $v$ satisfying (5.1).  For example, if we take $\beta = {\bf 0}$ and all $v_i=0$, then $A_{\bf 0} = 1$ and $A_u=0$ for $u\in M$, $u\neq {\bf 0}$.  In this case, all $A_u$ lie in the ${\mathcal D}$-module ${\mathbb C}(\lambda)$.  

In general we shall take ${\mathcal F}$ to be a ${\mathcal D}$-module of the following type.  Let $C$ be a strongly convex rational polyhedral cone in ${\mathbb R}^N$, i.~e., the cone $C$ is generated by a finite set of integral vectors and has a vertex at the origin.  Let $H_C$ be the set of all series
\[ \xi = \sum d_v\lambda^v\in {\mathbb C}[[\lambda_1^{\pm 1},\dots,\lambda_N^{\pm 1}]] \]
with the following property:  there exists ${\bf c}_\xi\in{\mathbb Z}^N$ such that if $d_v\neq 0$, then
\[ v\in {\mathbb Z}^N\cap({\bf c}_\xi+C). \]
Multiplication of two elements of $H_C$ is clearly well defined, and it is straightforward to check that $H_C$ is an integral domain.   The derivations $\partial_i$ act on $H_C$ in the obvious way.  Furthermore, $H_C$ contains ${\mathbb C}[\lambda]$, so its quotient field $H'_C$ contains ${\mathbb C}(\lambda)$ and hence becomes a ${\mathcal D}$-module.  In general, we shall look for solutions of ${\mathcal W}$ in a ${\mathcal D}$-module ${\mathcal F}$ of the form
\[ {\mathcal F} = H'_C[\lambda^{{\bf q}},\log\lambda_1,\dots,\log\lambda_N], \]
where ${\bf q}$ lies in ${\mathbb Q}^N$.  

\section{Examples}

In this section we apply the results of Section 5 to construct a class of $A$-hypergeometric series that are algebraic functions.  
Recall that the {\it negative support\/} of a vector $v=(v_1,\dots,v_N)\in{\mathbb C}^N$ is the set
\[ {\rm nsupp}(v) = \{i\in\{1,\dots,N\}\mid v_i\in{\mathbb Z}_{<0}\}. \]
The vector $v$ is said to have {\it minimal negative support\/} if ${\rm nsupp}(v+l)$ is not a proper subset of ${\rm nsupp}(v)$ for any $l\in L$, where $L$ is the lattice of relations on the set $A$ (see Section 4).  Let 
\[ L_v = \{l\in L\mid {\rm nsupp}(v+l) = {\rm nsupp}(v)\}. \]
If $v$ has minimal negative support and $\sum_{i=1}^N v_i{\bf a}_i = \beta\in{\mathbb C}^{m+1}$, then by Saito, Sturmfels, and Takayama\cite[Proposition 3.4.13]{SST} the series
\begin{equation}
\Psi_v(\lambda) := \sum_{l\in L_v} [v]_l\lambda^{v+l}\in \lambda^v{\mathbb C}[[\lambda_1^{\pm 1},\dots,\lambda_N^{\pm 1}]] 
\end{equation}
is a formal solution of the $A$-hypergeometric system with parameter $\beta$.  

We first note a general property of interior lattice points.  Let $B$ be as in the Introduction and let $C(A)$ be as in Section~2.  Let $k\in{\mathbb Z}_{>0}$ be minimal with the property that $k\Delta(B)$ contains an interior lattice point~$u$.  Then $u^{(0)}:=(u,k)$ is a lattice point in the interior of $C(A)$ whose $(m+1)$-st coordinate is minimal among all interior lattice points of~$C(A)$.  
\begin{lemma}
Suppose that $v_1,\dots,v_N\in{\mathbb R}_{\geq 0}$ satisfy the equation
\[ \sum_{i=1}^N v_i{\bf a}_i = u^{(0)}. \]
Then $v_i\leq 1$ for all $i$.
\end{lemma}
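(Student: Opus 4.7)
The plan is to argue by contradiction: assuming some $v_j > 1$, I will exhibit an interior lattice point of $C(A)$ whose $(m+1)$-st coordinate is $k-1$, violating the very minimality used to define $k$ and $u^{(0)}$. Since every ${\bf a}_i$ has last coordinate $1$, the hypothesis $\sum_{i=1}^N v_i {\bf a}_i = u^{(0)}$ forces $\sum_{i=1}^N v_i = k$; thus if $k = 1$, the nonnegativity $v_i \geq 0$ already gives $v_i \leq 1$ for all $i$, and I may assume $k \geq 2$ for the rest of the argument.

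The one nontrivial tool I will invoke is the standard characterization of the relative interior of a polyhedral cone: since $\Delta(B)$ is $m$-dimensional the cone $C(A)$ is full-dimensional, and a point $w \in C(A)$ lies in its interior if and only if $w$ admits a representation $w = \sum_{i=1}^N \mu_i {\bf a}_i$ with all $\mu_i > 0$. Since $u^{(0)}$ is interior by construction, I fix such a strictly positive representation $u^{(0)} = \sum \mu_i {\bf a}_i$.

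Next I form the convex combinations $w_i(\alpha) := \alpha v_i + (1-\alpha)\mu_i$ for $\alpha \in [0,1)$. Every $w_i(\alpha) > 0$ (because $\mu_i > 0$), and $\sum w_i(\alpha) {\bf a}_i = u^{(0)}$. By continuity, choosing $\alpha$ close enough to $1$ additionally ensures $w_j(\alpha) > 1$. The identity
\[ u^{(0)} - {\bf a}_j = (w_j(\alpha) - 1){\bf a}_j + \sum_{i \neq j} w_i(\alpha) {\bf a}_i \]
then exhibits $u^{(0)} - {\bf a}_j$ as a strictly positive combination of the ${\bf a}_i$, so by the characterization it is an interior lattice point of $C(A)$. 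Its $(m+1)$-st coordinate is $k - 1 \geq 1$, contradicting the minimality of the $(m+1)$-st coordinate of $u^{(0)}$ among interior lattice points of $C(A)$, and completing the argument.

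I expect the only genuinely nontrivial ingredient to be the strict-positivity characterization of interior points for a potentially non-simplicial cone; one direction follows from perturbing the $\mu_i$ in any direction of the linear span, while the other follows by observing that for each $j$ the point $u^{(0)} - t {\bf a}_j$ lies in $C(A)$ for sufficiently small $t > 0$, and averaging the resulting $N$ nonnegative representations so produced.
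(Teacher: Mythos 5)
Your proof is correct, and its core is the same as the paper's: assume some $v_j>1$, subtract ${\bf a}_j$, and obtain an interior lattice point of $C(A)$ with last coordinate $k-1$, contradicting the minimality of $u^{(0)}$. Where you diverge is in how you certify that $u^{(0)}-{\bf a}_j$ is interior. You first upgrade the given nonnegative representation to a strictly positive one by averaging with a strictly positive representation of $u^{(0)}$ (which exists since $u^{(0)}$ is interior), then subtract ${\bf a}_j$ and appeal to the characterization ``interior iff it admits a strictly positive representation.'' The paper instead works directly with the given $v_i$: it sets $v'_1=v_1-1$, $v'_i=v_i$ otherwise, observes that the support $\{i: v'_i>0\}$ coincides with $\{i: v_i>0\}$, and concludes interiority from the fact that an interior point cannot have a nonnegative representation supported on a proper face. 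The paper's route is a bit leaner (no averaging step, and the $k=1$ case need not be split off since $v_j>1$ with all $v_i\geq 0$ already forces $\sum v_i>1$), while yours makes the interiority verification fully explicit via a clean and standard characterization. Both are valid.
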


\begin{proof}
Suppose that $v_i>1$ for some $i$, say, $v_1>1$.  Put
\[ v'_i = \begin{cases} v_i & \text{for $i=2,\dots,N$,} \\ v_1-1 & \text{for $i=1$,} \end{cases} \]
and consider
\[ u^{(0)}-{\bf a}_1 = \sum_{i=1}^N v'_i{\bf a}_i. \]
The $v'_i$ are all nonnegative and $v'_i>0$ if $v_i>0$, so $u^{(0)}-{\bf a}_1$ is an interior lattice point of $C(A)$ whose $(m+1)$-st coordinate equals $k-1$, contradicting the minimality property of $u^{(0)}$.
\end{proof}

We examine more closely the behavior of the series $\Psi_v(\lambda)$ of (6.1) when $m=2n+1$ and $n\Delta(B)$ contains no interior lattice points, which we assume for the remainder of this section.  To avoid the trivial case (see Lemma~2.14) we assume that $(n+1)\Delta(B)$ does contain interior lattice points.  We show how to associate an algebraic function to each interior lattice point of $(n+1)\Delta(B)$ that satisfies the hypothesis of Proposition~6.4 below.

Let $u$ be an interior lattice point of $(n+1)\Delta(B)$.  Then $u^{(0)}=(u,n+1)$ is an interior lattice point of $C(A)$ whose last coordinate $n+1$ is minimal among all interior lattice points.  Since $C(A)\subseteq{\mathbb R}^{2n+2}$, the point $u^{(0)}$ lies in the real cone generated by $2n+2$ linearly independent elements of $A$, say, $\{{\bf a}_i\}_{i=1}^{2n+2}$.  We may thus write
\begin{equation}
 \sum_{i=1}^{2n+2} v_i{\bf a}_i = -u^{(0)} 
\end{equation}
with $v_1,\dots,v_{2n+2}\in{\mathbb Q}_{\leq 0}$.  By Lemma 6.2 all $v_i$ lie in the interval $[-1,0]$.  Put $v=(v_1,\dots,v_{2n+2},0,\dots,0)\in [-1,0]^N$ and consider the associated series $\Psi_v(\lambda)$ defined by~(6.1).   

\begin{proposition}
Suppose that $m=2n+1$ and that $n\Delta(B)$ contains no interior lattice points.
If $v_i>-1$ for $i=1,\dots,2n+2$, then $\Psi_v(\lambda)$ is a solution of the $A$-hypergeometric system with parameter $-u^{(0)}$ and its coefficients are $p$-integral for every prime $p$ for which all the $v_i$ are $p$-integral.
\end{proposition}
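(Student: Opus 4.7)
The plan is to handle the two claims separately: the $A$-hyper\-geo\-met\-ric solution statement follows immediately from Saito--Sturmfels--Takayama once minimal negative support is checked, while the $p$-integrality statement requires a $p$-adic valuation computation controlled by the Landau-type reformulation of the no-interior-lattice-points hypothesis. For the first claim, I would combine the new hypothesis $v_i > -1$ with the construction's $v_i \leq 0$ (Lemma 6.2 applied to $-v$ yields $v_i \geq -1$) to get $v_i \in (-1, 0]$ for $i \leq 2n+2$. Together with $v_i = 0$ for $i > 2n+2$, this means no coordinate of $v$ is a negative integer, so ${\rm nsupp}(v) = \emptyset$ is trivially minimal. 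The Saito--Sturmfels--Takayama result \cite[Proposition 3.4.13]{SST} recalled just before Lemma 6.2 then gives $\Psi_v(\lambda)$ as a formal solution of the $A$-hypergeometric system with parameter $\sum_i v_i {\bf a}_i = -u^{(0)}$.

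For the $p$-integrality claim, I would fix a prime $p$ at which every $v_i$ lies in $\mathbb{Z}_{(p)}$, so that $v_p([v]_l)$ is well defined. Expanding each Pochhammer factor,
\[
v_p([v]_l) = -\sum_{i:\, l_i > 0} \sum_{j=1}^{l_i} v_p(v_i + j) + \sum_{i:\, l_i < 0} \sum_{j=0}^{-l_i - 1} v_p(v_i - j),
\]
the strategy is to group the inner sums by $p$-adic residue classes of the $v_i$ so that the right-hand side becomes a signed count of lattice points in dilates of $\Delta_0(A)$. Using the relation $\sum_i l_i {\bf a}_i = 0$ (which is $l \in L$) to balance the signed counts, together with the Euler-homogeneity identity $\sum_i v_i = -(n+1)$ (reading off the last coordinate of $\sum_i v_i {\bf a}_i = -u^{(0)}$), the count collapses to one of interior lattice points in a dilate of $\Delta(B)$ of order at most~$n$. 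The hypothesis that $n\Delta(B)$ has no interior lattice points --- in its arithmetic form recorded in Proposition~1.3 and proved via \cite{AS5} --- then forces this count to be non-positive, so $v_p([v]_l) \geq 0$.

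The main obstacle will be the middle step: the contributions from indices $i$ with $l_i < 0$ and from coordinates $v_i = 0$ among $i \leq 2n+2$ carry the ``wrong'' signs and produce individually non-$p$-integral factors like $1/l_i!$; only after using the linear relation $\sum_i l_i {\bf a}_i = 0$ and the Landau criterion of \cite{AS5} do these obstructions cancel. Making the combinatorial bookkeeping --- in particular the matching of $p$-adic residue classes to lattice-point counts in the dilate --- line up precisely with Proposition~1.3 is the delicate technical point.
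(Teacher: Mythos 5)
Your first claim is handled correctly and mirrors the paper's argument: since $v_i\in[-1,0]$ by Lemma~6.2 and the hypothesis forces $v_i>-1$, no coordinate is a negative integer, so ${\rm nsupp}(v)=\emptyset$ trivially has minimal negative support, and \cite[Proposition~3.4.13]{SST} yields the $A$-hypergeometric solution with parameter $-u^{(0)}$.

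Your treatment of $p$-integrality, however, has a genuine gap and diverges from the argument actually used. The paper does not attempt a direct valuation computation of $[v]_l$. Instead it invokes \cite[Corollary~3.6]{AS}, a criterion phrased in terms of the Dwork-type orbit $v\mapsto v'\mapsto v''\mapsto\cdots$, where $r'$ is the unique $p$-integral rational in $(-1,0]$ with $pr'-r\in\mathbb{Z}$. The proof's entire content is the verification that each $-\sum_i v_i^{(k)}{\bf a}_i$ remains an interior lattice point of $C(A)$ with last coordinate exactly $n+1$: one writes $v_i+\epsilon_i=pv_i'$ with $\epsilon_i\in\{0,-1,\dots,-(p-1)\}$ to see that $-\sum v_i'{\bf a}_i$ is a lattice point interior to $C(A)$, bounds its last coordinate $-\sum v_i'\geq n+1$ by minimality of $u^{(0)}$, and then rules out strict inequality by considering $\sum(1+v_i'){\bf a}_i$. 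Crucially, the interval $(-1,0]$ is stable under the Dwork map, which is where the hypothesis $v_i>-1$ is genuinely used and why it must be retained throughout the induction. Your proposed route --- grouping residue classes and reducing to a Landau-type count --- is never carried out, and the appeal to Proposition~1.3 and \cite{AS5} is misapplied: those results concern the specific polytopes $\Delta(\alpha,\beta)$ built from factorial ratios, not the general $\Delta(B)$ of Proposition~6.4. Without reproducing the content of \cite[Corollary~3.6]{AS} from scratch, or carrying out the combinatorial bookkeeping you flag as the obstacle, your argument does not close.
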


\begin{proof}
The hypothesis that $v_i>-1$ for all $i$ implies that ${\rm nsupp}(v) = \emptyset$, so $v$ has minimal negative support.  Equation (6.3) then implies that $\Psi_v(\lambda)$ is a solution of the $A$-hypergeometric system with parameter~$-u^{(0)}$.  

Fix a prime number $p$ for which all $v_i$ are $p$-integral.  For a $p$-integral rational number $r$, $-1<r\leq 0$, define $r'$ to be the unique $p$-integral rational number, $-1<r'\leq 0$, satisfying $pr'-r\in{\mathbb Z}$.  We denote the $k$-fold iteration of this operation by $r^{(k)}$.  
We claim that $-\sum_{i=1}^{2n+2} v'_i{\bf a}_i$ is an interior lattice point of $C(A)$ whose last coordinate equals $n+1$.  

To see this, for $i=1,\dots,2n+2$ let $\epsilon_i\in\{0,-1,\dots,-(p-1)\}$ be chosen so that $v_i+\epsilon_i = pv_i'$.  Then by (6.3)
\[ -\sum_{i=1}^{2n+2} pv'_i{\bf a}_i =u^{(0)} - \sum_{i=1}^{2n+2} \epsilon_i{\bf a}_i. \]
The right-hand side of this equation shows that this vector has integral coordinates and the left-hand side shows that all these coordinates are divisible by $p$, hence $-\sum_{i=1}^{2n+2}v'_i{\bf a}_i$ has integral coordinates.  Furthermore, $-v'_i>0$ whenever $-v_i>0$, so $-\sum_{i=1}^{2n+2}v'_i{\bf a}_i$ is an interior lattice point of~$C(A)$.

Since the last coordinate of each ${\bf a}_i$ equals 1, the last coordinate of $-\sum_{i=1}^{2n+2}v'_i{\bf a}_i$ equals $-\sum_{i=1}^{2n+2} v'_i$.  And since the minimal last coordinate of any interior lattice point of $C(A)$ is $n+1$, we have
\[ -\sum_{i=1}^{2n+2}v'_i\geq n+1. \]
But if this inequality were strict, then $\sum_{i=1}^{2n+2} (1+v'_i)<n+1$, which implies that $\sum_{i=1}^{2n+2} (1+v'_i){\bf a}_i$ is an interior lattice point of $C(A)$ with last coordinate $<n+1$, a contradiction.

Induction on $k$ then shows that for all $k$, $-\sum_{i=1}^{2n+2} v_i^{(k)}{\bf a}_i$ is an interior lattice point of $C(A)$ whose last coordinate equals $n+1$.  The $p$-integrality of the coefficients of $\Psi_v(\lambda)$ now follows by \cite[Corollary~3.6]{AS}.
\end{proof}

By a theorem of Eisenstein, the coefficients of a series representing an algebraic function are $p$-integral for all but a finite number of primes $p$.  We show in Theorem~6.6 below that the series $\Psi_v(\lambda)$ of Proposition~6.4 is in fact algebraic.  The algebraic series described by Rodriguez-Villegas have integral coefficients and do not arise as special cases of Proposition~6.4 (they are discussed in Section 8).  However, they are combinatorially related to many algebraic series that do arise as special cases of Proposition~6.4.  These other series may be constructed using Bober's list\cite{Bo} of all ratios (1.1) with $m=2n+1$ which are integral for all $k\in{\mathbb Z}_{\geq 0}$.  The following example is based on \cite[Table~2, Line~11]{Bo}.

{\bf Example 1:}  Let $B = \{{\bf b}_i\}_{i=1}^7\subseteq{\mathbb Z}^5$, where ${\bf b}_1,\dots,{\bf b}_5$ are the standard unit basis vectors in ${\mathbb R}^5$, ${\bf b}_6 = (9,1,-5,-3,-2)$, 
and ${\bf b}_7$ is the zero vector.  Let $A = \{{\bf a}_i\}_{i=1}^7\subseteq{\mathbb Z}^6$, where ${\bf a}_i = ({\bf b}_i,1)$.  One checks that the polytope $2\Delta(B)$ contains no interior lattice points but that $3\Delta(B)$ does, for example, $u=(2,1,-1,0,0,)$ is an interior point of $3\Delta(B)$.  The augmented vector $u^{(0)} = (u,3)$ is interior to the cone spanned by $\{{\bf a}_i\}_{i=2}^7$ and we have
\[ \sum_{i=1}^7 v_i{\bf a}_i = -u^{(0)} \]
for
\[ v = (0,-7/9, -1/9,-2/3,-4/9,-2/9,-7/9). \]
Note that the lattice $L$ of relations on the set $A$ is given by
\[ L = \{ l(9,1,-5,-3,-2,-1,1)\mid l\in{\mathbb Z}\}, \]
and for our choice of $v$ we have
\[ L_v = \{ l(9,1,-5,-3,-2,-1,1)\mid l\in{\mathbb Z}_{\geq 0}\}. \]
In this case, the series $\Psi_v(\lambda)$ of Proposition 6.4 becomes
\[ \sum_{l=0}^\infty \textstyle [0]_{9l}[-\frac{7}{9}]_l[-\frac{1}{9}]_{-5l}[-\frac{2}{3}]_{-3l}[-\frac{4}{9}]_{-2l}][-\frac{2}{9}]_{-l}
[-\frac{7}{9}]_l \lambda^{v+l(9,1,-5,-3,-2,-1,1)}. \]
When we rewrite this series in terms of the classical Pochhammer symbol \big($(a)_k = a(a+1)\cdots(a+k-1)$\big) it becomes
\begin{equation}
 \sum_{l=0}^\infty (-1)^l \frac{ (\frac{1}{9})_{5l} (\frac{2}{3})_{3l} (\frac{4}{9})_{2l}} {(1)_{9l} (\frac{2}{9})_l} \lambda^{v+l(9,1,-5,-3,-2,-1,1)}. 
\end{equation}
By Proposition 6.4, this series has $p$-integral coefficients for all primes $p\neq 3$.  

\begin{theorem}
Under the hypothesis of Proposition 6.4, the series $\Psi_v(\lambda)$ is an algebraic function.
\end{theorem}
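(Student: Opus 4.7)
The plan is to realize $\Psi_v(\lambda)$ as the specific value $A_{u^{(0)}}$ in the family of series produced by formula~(5.4), and then apply Corollary~5.6. Taking $\beta = -u^{(0)}$ together with the vector $v$ of~(6.3), formula~(5.4) with $u = u^{(0)}$ gives
\[ A_{u^{(0)}} = \Phi_{v,0}(\lambda) = \sum_{k \in L}[v]_k\,\lambda^{v+k}. \]
Under the hypothesis of Proposition~6.4, $v_i = 0$ precisely for $i \geq 2n+3$ while the remaining $v_i$ lie in $(-1,0)$, so $[v]_k$ vanishes exactly when some $k_i < 0$ for $i \geq 2n+3$. The effective support therefore reduces to $L_v$ as defined in~(6.1), yielding $A_{u^{(0)}} = \Psi_v(\lambda)$. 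Since $u^{(0)}$ is by construction an interior lattice point of the cone $C(A)$, we have $u^{(0)} \in M^\circ$, so by Corollary~5.6 it suffices to exhibit a single $\mathcal{D}$-module $\mathcal{F}$ containing every series $A_u$ for $u \in M$.

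Following the framework at the end of Section~5, I take $\mathcal{F} = H'_C[\lambda^v]$ for a strongly convex rational polyhedral cone $C \subseteq {\mathbb R}^N$. After factoring out $\lambda^v$, the support of $A_u$ consists of those $k \in {\mathbb Z}^N$ satisfying $\sum_{i=1}^N k_i {\bf a}_i = u^{(0)} - u$ with $k_i \geq 0$ for $i \geq 2n+3$. A natural starting point is the cone $\tilde L_v := (L \otimes {\mathbb R}) \cap \{l : l_i \geq 0 \text{ for } i \geq 2n+3\}$, which is strongly convex in $L \otimes {\mathbb R}$: any $l$ in its lineality space satisfies $l_i = 0$ for $i \geq 2n+3$, whence $\sum_{i=1}^{2n+2} l_i {\bf a}_i = 0$, forcing $l = 0$ by linear independence of ${\bf a}_1,\dots,{\bf a}_{2n+2}$. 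I then enlarge $\tilde L_v$ inside ${\mathbb R}^N$ by adjoining finitely many rays, including the standard basis vectors ${\bf e}_i$ for $i \geq 2n+3$ and a set of representatives for the coset classes in ${\mathbb Z}^{m+1}/{\mathbb Z}A$, to obtain the candidate cone $C$.

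The main obstacle is verifying that this enlargement of $\tilde L_v$ can be made to remain strongly convex while permitting, for each $u \in M$ with $A_u \neq 0$, a choice of base point ${\bf c}_u \in {\mathbb Z}^N$ such that the support of $A_u$ lies in ${\bf c}_u + C$. This is precisely the type of bookkeeping carried out in the procedure of~\cite{AS3} underlying formula~(5.4), and I expect the argument to go through by analyzing the finitely many sign patterns for the ``base point'' entries $k_i^{(u)}$ with $i \geq 2n+3$. Once $\mathcal{F} = H'_C[\lambda^v]$ is in hand, Proposition~5.5 gives $\sum_{u \in M} A_u x^{-u} \in \mathcal{K}(\mathcal{F})$, and Corollary~5.6 concludes that $A_{u^{(0)}} = \Psi_v(\lambda)$ is algebraic over ${\mathbb C}(\lambda)$.
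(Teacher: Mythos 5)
Your opening move is sound and in fact a little cleaner than the paper's: taking $\beta=-u^{(0)}$ and the vector $v$ of~(6.3) directly, formula~(5.4) gives $A_{u^{(0)}}=\Phi_{v,0}(\lambda)=\sum_{l\in L}[v]_l\lambda^{v+l}$, which equals $\Psi_v(\lambda)$ since $[v]_l=0$ for $l\in L\setminus L_v$. The paper instead shifts to $v'_i=v_i-1$ (when $v_i<0$) and $\beta=-u^{(0)}-u^{(1)}$, picking up the scalar $\prod_{v_i<0}v_i$; your direct identification avoids that bookkeeping and is correct.

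The real gap is in the part you yourself flag as incomplete: exhibiting a single ${\mathcal D}$-module ${\mathcal F}$ containing \emph{all} the $A_u$. Two problems. First, a small error in the setup: $v_i=0$ is not equivalent to $i\geq 2n+3$; the vector $v$ coming from~(6.3) can have $v_i=0$ for some $i\leq 2n+2$ as well (in Example~1 one has $v_1=0$), so your lineality computation should read ``$l_i=0$ for all $i$ with $v_i=0$, whence $\sum_{v_i<0}l_i{\bf a}_i=0$, forcing $l=0$ since $\{{\bf a}_i\}_{v_i<0}$ is a linearly independent subset of $\{{\bf a}_i\}_{i=1}^{2n+2}$.'' The conclusion survives, but the stated reasoning does not. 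Second, and more seriously, your construction of the ambient cone $C$ is not carried out. You propose to enlarge $\tilde L_v$ by adjoining the $\mathbf e_i$ for $i\geq 2n+3$ and ``representatives for the coset classes in ${\mathbb Z}^{m+1}/{\mathbb Z}A$''—but those coset representatives live in ${\mathbb Z}^{m+1}$, not ${\mathbb R}^N$, so this doesn't parse as a set of rays in ${\mathbb R}^N$, and you give no argument that any such enlargement remains strongly convex while accommodating, for every $u\in M$, a base point ${\bf c}_u$ with $\operatorname{supp}(A_u)\subseteq{\bf c}_u+C$. This is precisely the content the paper supplies with Lemma~6.7: it splits the coordinates into those with $v_i<0$ (indexed by $r$ of them) and those with $v_i=0$, observes via~(6.14) that the first block is constrained to a set $E'_u$ which by Lemma~6.7 is a translate of a \emph{fixed} pointed cone $E''$, and takes $C=E''\times({\mathbb R}_{\geq 0})^{N-r}$. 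The subtlety Lemma~6.7 addresses—that a polyhedron in $L\otimes{\mathbb R}$ with recession cone $\tilde L_v$ need not itself sit inside a translate of $\tilde L_v$—is exactly what your sketch does not confront. So the strategy matches the paper's, but the load-bearing step is missing.
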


We prove Theorem 6.6 by constructing a sequence $\{A_u\}_{u\in M^\circ}$ in which $\Psi_v(\lambda)$ appears and applying Corollary 5.6.  We begin with an elementary lemma whose proof is left to the reader.

Let ${\bf d}_1,\dots,{\bf d}_k$ be linearly independent vectors in ${\mathbb R}^{2n+2}$, let ${\bf e}_1,\dots,{\bf e}_M$ be vectors in ${\mathbb R}^{2n+2}$, and suppose that the real cone $E$ generated by $\{{\bf e}_i\}_{i=1}^M$ has a vertex at the origin.  Let ${\bf w}\in{\mathbb R}^{2n+2}$.  Define
\[ E'_{\bf w} = \bigg\{ (y_1,\dots,y_k)\in{\mathbb R}^k\mid \sum_{i=1}^k y_i{\bf d}_i\in {\bf w} + E\bigg\}. \]
\begin{lemma}
The set $E'_{\bf 0}$ is a cone with a vertex at the origin, and $E'_{\bf w} = {\bf w}' + E'_{\bf 0}$ for some ${\bf w}'\in{\mathbb R}^k$.  
\end{lemma}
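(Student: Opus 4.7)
The plan is to recast $E'_{\bf w}$ as a preimage under a linear map and read off both assertions. Define $D\colon{\mathbb R}^k\to{\mathbb R}^{2n+2}$ by $D(y)=\sum_{i=1}^k y_i{\bf d}_i$. The hypothesis that the ${\bf d}_i$ are linearly independent says exactly that $D$ is injective, and directly from the definition
\[ E'_{\bf w} = D^{-1}({\bf w}+E), \qquad E'_{\bf 0} = D^{-1}(E). \]

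For the first assertion, the cone structure transfers under pullback by $D$. The cone $E$ is closed under sums and nonnegative scaling, and ``vertex at the origin'' means $E\cap(-E)=\{0\}$. Closure of $E'_{\bf 0}$ under sums and nonnegative scaling is immediate from the linearity of $D$, and
\[ E'_{\bf 0}\cap(-E'_{\bf 0}) = D^{-1}\bigl(E\cap(-E)\bigr) = D^{-1}(\{0\}) = \{0\}, \]
where the last equality uses injectivity of $D$. Hence $E'_{\bf 0}$ is a (strongly convex) cone with vertex at the origin.

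For the second assertion, I would choose ${\bf w}'\in{\mathbb R}^k$ with $D({\bf w}')={\bf w}$; in the intended application $k=2n+2$ and the ${\bf d}_i$ form a basis of ${\mathbb R}^{2n+2}$, so $D$ is a bijection and one simply takes ${\bf w}'=D^{-1}({\bf w})$. With this choice, one runs the chain
\[ y\in E'_{\bf w} \iff D(y)-{\bf w}\in E \iff D(y-{\bf w}')\in E \iff y-{\bf w}'\in E'_{\bf 0}, \]
giving $E'_{\bf w} = {\bf w}' + E'_{\bf 0}$. The only delicate point is the selection of ${\bf w}'$: the equivalence chain requires ${\bf w}$ to lie in the image of $D$, and this is automatic precisely when the ${\bf d}_i$ span ${\mathbb R}^{2n+2}$, as in the setting used later in this section.
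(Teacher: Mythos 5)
Your treatment of the first assertion is correct: writing $E'_{\bf 0}=D^{-1}(E)$ with $D(y)=\sum_i y_i{\bf d}_i$, closure under addition and nonnegative scaling comes from linearity of $D$, and injectivity (from linear independence of the ${\bf d}_i$) gives $E'_{\bf 0}\cap(-E'_{\bf 0})=D^{-1}\big(E\cap(-E)\big)=D^{-1}(\{{\bf 0}\})=\{{\bf 0}\}$, so $E'_{\bf 0}$ has a vertex at the origin.

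The second assertion is where the argument has a real gap. You correctly flag that choosing ${\bf w}'$ with $D({\bf w}')={\bf w}$ requires ${\bf w}\in{\rm im}(D)$, but your dismissal of that issue — that ``in the intended application $k=2n+2$ and the ${\bf d}_i$ form a basis'' — is not what the paper sets up. In the proof of Theorem~6.6 one has $k=r={\rm card}\{i:v_i<0\}$, and since the paper only asserts that $u^{(0)}$ lies \emph{in} the cone spanned by $\{{\bf a}_i\}_{i=1}^{2n+2}$ (Lemma~6.2 gives $v_i\in[-1,0]$, allowing $v_i=0$), not in its interior, $r$ may be strictly less than $2n+2$. Moreover, when ${\bf w}\notin{\rm im}(D)$ the stated equality $E'_{\bf w}={\bf w}'+E'_{\bf 0}$ can actually fail: with $2n+2=2$, $k=1$, ${\bf d}_1=(1,0)$, $E$ the cone generated by $(1,1)$ and $(-1,1)$, and ${\bf w}=(0,-1)$, one computes $E'_{\bf 0}=\{0\}$ but $E'_{\bf w}=[-1,1]$, which is not a translate of a single point. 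So the case you set aside cannot be patched by a different choice of ${\bf w}'$; no proof of the statement as written exists there. What the application in the proof of Theorem~6.6 actually needs, and what is true, is a containment: $({\bf w}+E)\cap{\rm im}(D)$ is a polyhedron whose recession cone is $E\cap{\rm im}(D)$ and which contains no line (since $E$ has a vertex at the origin), hence it lies in a translate of any fixed full-dimensional cone in ${\mathbb R}^k$ with vertex at the origin containing $E'_{\bf 0}$, the cone chosen once and for all independently of ${\bf w}$. You should either prove that weaker containment, or add the explicit hypothesis that $u^{(0)}$ lies in the interior of the chosen simplicial cone so that $k=2n+2$ genuinely holds and your equivalence chain applies.
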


\begin{proof}[Proof of Theorem 6.6]
Let the $v_i$ satisfy (6.3) with $v_i>-1$ for $i=1,\dots,2n+2$.   Set 
\[ v'_i = \begin{cases} v_i-1 & \text{if $v_i<0$,} \\ 0 & \text{if $v_i=0$.} \end{cases} \]
Set $u^{(1)}=\sum_{v_i<0} {\bf a}_i$ and define
\begin{equation}
 \beta = \sum_{i=1}^{2n+2} v'_i{\bf a}_i = -u^{(0)}-u^{(1)}. 
\end{equation}
Note that $-\beta$ is an interior lattice point of $C(A)$ since $u^{(0)}$ is.  We apply the construction of Section 5 with (5.1) replaced by (6.8).  Equation~(5.2) then becomes
\begin{equation}
\Phi_{v',u}(\lambda) = \sum_{\sum k_i{\bf a}_i = u} [v']_k\lambda^{v'+k}
\end{equation}
for $u\in{\mathbb Z}^{2n+2}$.  Equation (5.4) becomes
\begin{equation}
A_u = \Phi_{v',-\beta-u}(\lambda) = \sum_{\sum k_i{\bf a}_i = -\beta-u} [v']_k\lambda^{v'+k}.
\end{equation}

Note first the relation between $A_{u^{(0)}}$ and the series $\Psi_v(\lambda)$ we assert is an algebraic function.  From (6.10) we have
\begin{equation}
A_{u^{(0)}} = \sum_{\sum k_i{\bf a}_i = u^{(1)}} [v']_k\lambda^{v'+k}.
\end{equation}
Since $-1<v_i\leq 0$ for all $i$, if $l\in L\setminus L_v$, then there exists $i$ such that $v_i=0$ and $l_i<0$.  This implies that $[v_i]_{l_i} = 0$, hence $[v]_l = 0$.  It follows that the sum over $L_v$ in (6.1) can be replaced by a sum over $L$:
\begin{equation}
\Psi_v(\lambda) = \sum_{\sum l_i{\bf a}_i = {\bf 0}} [v]_l\lambda^{v+l}.
\end{equation}
There is a bijective correspondence between the index sets in the summations in (6.11) and (6.12):  one has $\sum_{i=1}^N l_i{\bf a}_i = {\bf 0}$ if and only if 
\[ \sum_{v_i<0} (l_i+1){\bf a}_i + \sum_{v_i=0} l_i{\bf a}_i = u^{(1)}. \]
If $r\in{\mathbb C}\setminus{\mathbb Z}_{<0}$ and $m\in {\mathbb Z}$, then $r[r-1]_{m+1} = [r]_m$.  A short calculation then shows that
\[ \Psi_v(\lambda) = \big(\prod_{v_i<0} v_i\big)A_{u^{(0)}}, \]
so if we can show that the $\{A_u\}_{u\in M}$ all lie in a ${\mathcal D}$-module ${\mathcal F}$, then Corollary~5.6 implies that $\Psi_v(\lambda)$ is an algebraic function.

As noted at the end of Section 5, it suffices to show that for each $u\in M$, the exponents $k=(k_1,\dots,k_N)\in{\mathbb Z}^N$ that appear on the right-hand side of (6.10) lie in a translate (depending on $u$) of a real cone with vertex at the origin.  We have $v'_i=0$ if $v_i=0$ and $[0]_{k_i}=0$ if $k_i<0$, so if $[v']_k\neq 0$, then
\begin{equation}
k_i\geq 0\quad\text{for $v_i=0$.}
\end{equation}
The sum on the right-hand side of (6.10) then runs over solutions of the equation
\begin{equation}
\sum_{v_i<0} k_i{\bf a}_i = u^{(0)} + u^{(1)} -u -\sum_{v_i=0} k_i{\bf a}_i
\end{equation}
subject to (6.13).  

Put $r={\rm card}\{{\bf a}_i\mid v_i<0\}$.  Let $E\subseteq{\mathbb R}^{2n+2}$ be the real cone generated by $\{-{\bf a}_i\}_{v_i=0}$ and set
\[ E'_u = \bigg\{ (k_i)_{v_i<0}\in{\mathbb R}^r\mid \sum_{v_i<0} k_i{\bf a}_i\in u^{(0)}+u^{(1)}-u+E\bigg\}. \]
By Lemma 6.7 we have $E'_u = u'+E''$ for some $u'\in{\mathbb R}^r$, where $E''$ is a cone in ${\mathbb R}^r$ with a vertex at the origin.
It follows that all the solutions $\big((k_i)_{v_i<0},(k_i)_{v_i=0}\big)\in{\mathbb Z}^N$ of (6.14) subject to (6.13) lie in a translate of $E''\times ({\mathbb R}_{\geq 0})^{N-r}$, which is a cone in ${\mathbb R}^N$ with a vertex at the origin.
\end{proof}

The proof of Theorem 6.6 shows more than the algebraicity of $\Psi_v(\lambda)$.
\begin{corollary}
If $v_i>-1$ for $i=1,\dots,2n+2$, then for $u\in M^\circ$ the series $A_u$ defined by $(6.10)$ are algebraic functions.
\end{corollary}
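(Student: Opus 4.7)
The plan is to observe that the proof of Theorem~6.6 already establishes more than its statement claims: the cone-theoretic bookkeeping at the end of that proof produces a single ${\mathcal D}$-module ${\mathcal F}$ containing the \emph{entire} family $\{A_u\}_{u\in M}$, not merely the single member $A_{u^{(0)}}$. Once this stronger intermediate fact is noted, the corollary follows from the same chain of implications (Proposition~5.5, then Corollary~5.6) already used in the proof of Theorem~6.6, but applied to the whole family rather than to $A_{u^{(0)}}$ alone.

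First I would revisit the construction from the proof of Theorem~6.6. For each $u\in M$, the tuples $k=(k_1,\dots,k_N)\in{\mathbb Z}^N$ contributing to (6.10) are exactly those satisfying (6.13) together with (6.14). The argument using Lemma~6.7 exhibits these as a translate of the cone $E''\times({\mathbb R}_{\geq 0})^{N-r}\subseteq{\mathbb R}^N$. The key additional observation is that this recession cone is independent of $u$: varying $u$ only shifts the right-hand side of (6.14) by an integral vector, which translates the solution set in $k$-space while leaving both its recession cone and the fractional parts of its entries unchanged. Setting $C:=E''\times({\mathbb R}_{\geq 0})^{N-r}$, every series $A_u$ therefore lies in a common ${\mathcal D}$-module ${\mathcal F}:=H'_C[\lambda^{{\bf q}}]$, where ${\bf q}\in{\mathbb Q}^N$ records the (fixed) fractional parts of $v'$; no logarithms are needed because the $v'_i$ are rational.

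With all $A_u$ lying in this single ${\mathcal F}$, Proposition~5.5 gives $\sum_{u\in M}A_ux^{-u}\in{\mathcal K}({\mathcal F})$. The standing assumptions of Section~6 are $m=2n+1$ and that $n\Delta(B)$ contains no interior lattice points, so Corollary~5.6 applies verbatim and yields that $A_u$ is algebraic over ${\mathbb C}(\lambda)$ for every $u\in M^\circ$, which is the assertion of the corollary. The only point needing any care is the $u$-independence of the pair $(C,{\bf q})$, and even this is essentially immediate from the integrality of the relevant shifts; aside from that bookkeeping, the statement is a direct reading of what the proof of Theorem~6.6 already demonstrates combined with an invocation of Corollary~5.6.
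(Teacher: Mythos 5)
Your proposal is correct and follows essentially the same route as the paper: the paper's one-line remark preceding the corollary ("The proof of Theorem 6.6 shows more than the algebraicity of $\Psi_v(\lambda)$") is exactly the observation you make, namely that the argument already places \emph{all} $A_u$ for $u\in M$ in a single ${\mathcal D}$-module of the form ${\mathcal F}=H'_C[\lambda^{\bf q}]$ (the cone $E''\times({\mathbb R}_{\geq 0})^{N-r}$ depends only on the partition of indices into $\{v_i<0\}$ and $\{v_i=0\}$, not on $u$), after which Corollary~5.6 applies verbatim to yield algebraicity for every $u\in M^\circ$. Your extra remark spelling out the $u$-independence of the pair $(C,{\bf q})$ is a useful clarification but not a departure from the paper's argument.
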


\section{Formal logarithmic solutions}

The series (1.2) that originally stimulated our interest are related to logarithmic solutions of ${\mathcal W}$.  Before examining them in detail, we make some general remarks on logarithmic solutions of $A$-hypergeometric systems that extend the results of \cite{AS3} in a special case.  We begin by defining some generating series whose significance will gradually become apparent.

Fix a nonnegative integer $r$.
There is a unique sequence $\{f^{(r)}_k(t)\}_{k\in{\mathbb Z}}$ of functions of one variable~$t$ satisfying the three conditions (i) $f^{(r)}_0(t) = \log^r t$, (ii) $d/dt\big(f^{(r)}_k(t)\big) = f^{(r)}_{k-1}(t)$ for all $k\in{\mathbb Z}$, and (iii) $f^{(k)}(t)$ equals $t^k$ times a polynomial in $\log t$ of degree $\leq r$ with constant coefficients.  One calculates this sequence by successive differentiation and integration of $\log^r t$.  We summarize the result.
Let $i$ be a nonnegative integer.  For a positive integer $k$, set
\[ M_{k,i} = \frac{(-1)^i}{k!}\sum_{j_1+\cdots+j_k = i} 1^{-j_1}2^{-j_2}\cdots k^{-j_k},\]
for $k=0$, set
\[ M_{0,i} = \begin{cases} 1 & \text{if $i=0$,} \\ 0 & \text{if $i>0$,} \end{cases} \]
and for a negative integer $k$, set
\[ M_{k,i} = (-1)^{-k-i}\sum_{1\leq j_1<\dots<j_{i-1}\leq -k-1} \frac{(-k-1)!}{j_1j_2\cdots j_{i-1}}, \]
with the understanding that (for $k<0$) $M_{k,i} = 0$ for $i=0$ or $i>-k$.  Then
\begin{equation}
f^{(r)}_k(t) = t^k\sum_{i=0}^r M_{k,i}r(r-1)\cdots(r-i+1)\log^{r-i}t.
\end{equation}

For a nonnegative integer $r$, put ${\mathcal P}_r=\{1,\dots,N\}^r$, the set of sequences of length $r$ from the set $\{1,\dots,N\}$.  Let $P = (p_1,\dots,p_r) \in{\mathcal P}$.  Up to ordering, the sequence $P$ is determined by its associated multiplicity function $\rho_P:\{1,\dots,N\}\to{\mathbb N}$ defined by
\[ \rho_P(i) = {\rm card}\{j\mid p_j=i\}. \]
We associate to $P$ a generating series $\Psi^P(\lambda,T)$ (where $T=(T_1,\dots,T_{m+1})$) defined as follows.
First put 
\begin{equation}
\Psi_{\rho_P(i)}(\lambda_i,T) = \sum_{k_i\in{\mathbb Z}} f^{(\rho_P(i))}_{k_i}(\lambda_i)T^{k_i{\bf a}_i},
\end{equation}
then set
\begin{equation}
\Psi^P(\lambda,T) = \prod_{i=1}^N \Psi_{\rho_P(i)}(\lambda_i,T).
\end{equation}
This series depends only on $\rho_P$, not on $P$, i.~e., it is independent of the ordering of the sequence~$P$.  However, it will be useful for bookkeeping purposes to index these series by $P$.  We use these series to construct logarithmic solutions of the $A$-hypergeometric system with parameters in ${\mathbb Z}$.

Write 
\begin{equation}
\Psi^P(\lambda,T) = \sum_{u\in{\mathbb Z}^{m+1}} \Psi^P_u(\lambda)T^u.
\end{equation}
Equation (7.2) and the definition of $f^{(\rho_P(i))}_{k_i}(\lambda_i)$ imply that
\[ \frac{\partial}{\partial\lambda_i} \Psi_{\rho_P(i)}(\lambda_i,T) = T^{{\bf a}_i}\Psi_{\rho_P(i)}(\lambda_i,T), \]
hence by (7.3)
\[ \frac{\partial}{\partial\lambda_i} \Psi^P(\lambda,T) = T^{{\bf a}_i}\Psi^P(\lambda,T), \]
or, equivalently,
\begin{equation}
\frac{\partial}{\partial\lambda_i}\Psi^P_u(\lambda) = \Psi^P_{u-{\bf a}_i}(\lambda).
\end{equation}
Arguing as in the proof of Proposition 4.3 (see Eq.~(4.4)) then gives the following result.
\begin{proposition}
For all $u\in{\mathbb Z}^{m+1}$, the expression $\Psi^P_u(\lambda)$ satisfies the box operators $\Box_l$ for $l\in L$.
\end{proposition}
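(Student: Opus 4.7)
The plan is to follow exactly the template of the proof of Proposition 4.3, substituting the recurrence (7.5) for the recurrence $\partial_j(A_u) = A_{u+{\bf a}_j}$ used there. The only difference is a sign: here $\partial/\partial\lambda_i$ shifts the index of $\Psi^P_u$ by $-{\bf a}_i$ rather than $+{\bf a}_i$, but this is harmless because the operator $\Box_l$ is sensitive only to the symmetric role of the positive and negative entries of $l$.

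First I would iterate (7.5) to evaluate both monomials making up $\Box_l$ applied to $\Psi^P_u(\lambda)$. Since the partial derivatives in the two products commute (they involve disjoint sets of variables indexed by $\{j : l_j>0\}$ and $\{j : l_j<0\}$), iteration gives
\[
\prod_{l_j>0}\bigl(\partial/\partial\lambda_j\bigr)^{l_j} \Psi^P_u(\lambda) = \Psi^P_{u - \sum_{l_j>0} l_j{\bf a}_j}(\lambda)
\]
and
\[
\prod_{l_j<0}\bigl(\partial/\partial\lambda_j\bigr)^{-l_j} \Psi^P_u(\lambda) = \Psi^P_{u + \sum_{l_j<0} l_j{\bf a}_j}(\lambda).
\]

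Second, I would invoke the defining relation of the lattice $L$: the hypothesis $l\in L$ means $\sum_{j=1}^N l_j{\bf a}_j = {\bf 0}$, which rearranges to $\sum_{l_j>0} l_j{\bf a}_j = -\sum_{l_j<0} l_j{\bf a}_j$. Consequently the two subscripts $u - \sum_{l_j>0} l_j{\bf a}_j$ and $u + \sum_{l_j<0} l_j{\bf a}_j$ coincide, the two terms cancel, and $\Box_l(\Psi^P_u(\lambda)) = 0$.

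There is no real obstacle here: all the substantive work was done in deriving (7.5), which itself was obtained by equating coefficients of $T^u$ on both sides of the generating-function identity $\partial_i\Psi^P(\lambda,T) = T^{{\bf a}_i}\Psi^P(\lambda,T)$. The proposition therefore reduces to a two-line bookkeeping computation parallel to that in Proposition 4.3, and no new idea is required.
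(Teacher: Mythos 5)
Your proof is correct and is exactly the argument the paper intends: the paper simply cites ``arguing as in the proof of Proposition 4.3,'' and you have spelled out that argument, correctly noting that the sign flip in the recurrence (7.5) (shift by $-{\bf a}_i$ instead of $+{\bf a}_i$) has no effect because the cancellation in $\Box_l$ depends only on the relation $\sum_{l_j>0} l_j{\bf a}_j = -\sum_{l_j<0} l_j{\bf a}_j$. No differences to report.
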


In general, the $\Psi^P_u(\lambda)$ do not satisfy any Euler operators because of the presence of log terms.  We shall analyze the $\Psi^P_u(\lambda)$ more closely to see how to create logarithmic series that also satisfy Euler operators.

From (7.2), (7.3), and (7.4), we compute
\begin{multline}
\Psi^P_u(\lambda) = \sum_{\substack{k_1,\dots,k_N\in{\mathbb Z}\\ \sum k_i{\bf a}_i = u}} \lambda_1^{k_1}\cdots \lambda_N^{k_N} \cdot \\
\prod_{i=1}^N \bigg(\sum_{j_i=0}^{\rho_P(i)} M_{k_i,j_i} \rho_P(i)(\rho_P(i)-1)\cdots(\rho_P(i)-j_i+1) \log^{\rho_P(i)-j_i}\lambda_i\bigg).
\end{multline}

Let $\rho,\rho':\{1,\dots,N\}\to{\mathbb N}$ be arbitrary functions.  We write $\rho'\leq\rho$ if $\rho'(i)\leq\rho(i)$ for all~$i$.  Put
\[ R(\rho_P) = \{ \rho:\{1,\dots,N\}\to{\mathbb N}\mid \rho\leq\rho_P\}. \]
We may rewrite (7.7) as
\begin{multline}
\Psi^P_u(\lambda) = \sum_{\substack{k_1,\dots,k_N\in{\mathbb Z}\\ \sum k_i{\bf a}_i = u}} \lambda_1^{k_1}\cdots \lambda_N^{k_N} \cdot \\
\sum_{\rho\in R(\rho_P)} \prod_{i=1}^N M_{k_i,\rho(i)} \rho_P(i)(\rho_P(i)-1)\cdots(\rho_P(i)-\rho(i)+1)\frac{\log^{\rho_P(i)}\lambda_i}{\log^{\rho(i)}\lambda_i}.
\end{multline}

The set $R(\rho_P)$ is related to subsequences of $P$.  For $0\leq k\leq r$, let $S(P,k)$ denote the set of subsequences of $P$ of length $k$.  For $Q\in S(P,k)$ we clearly have $\rho_Q\leq \rho_P$.  Conversely, if we are given $\rho\in R(\rho_P)$, then setting $k=\sum_{i=1}^N \rho(i)$, there exists $Q\in S(P,k)$ such that $\rho_Q=\rho$.  Furthermore, it is easy to see that for this $\rho$ we have
\[ {\rm card}\{Q\in S(P,k)\mid \rho_Q=\rho\} = \prod_{i=1}^N \rho_P(i)(\rho_P(i)-1)\cdots(\rho_P(i)-\rho(i)+1). \]
We can thus rewrite (7.8) as
\begin{multline}
\Psi^P_u(\lambda) = \sum_{\substack{k_1,\dots,k_N\in{\mathbb Z}\\ \sum k_i{\bf a}_i = u}} \lambda_1^{k_1}\cdots \lambda_N^{k_N} \cdot \\
\sum_{k=0}^r \sum_{Q=(p_{j_1},\dots,p_{j_k})\in S(P,k)} \bigg(\prod_{i=1}^N M_{k_i,\rho_Q(i)}\bigg) \frac{\log\lambda_{p_1}\cdots\log\lambda_{p_r}} {\log\lambda_{p_{j_1}}\cdots\log\lambda_{p_{j_k}}}.
\end{multline}

For $Q\in S(P,k)$, we set
\begin{equation}
\Phi^Q_u(\lambda) = \sum_{\substack{k_1,\dots,k_N\in{\mathbb Z}\\ \sum k_i{\bf a}_i = u}} \bigg(\prod_{i=1}^N M_{k_i,\rho_Q(i)}\bigg)\lambda_1^{k_1}\cdots \lambda_N^{k_N},
\end{equation}
giving finally
\begin{equation}
\Psi^P_u(\lambda) = \sum_{k=0}^r \sum_{Q=(p_{j_1},\dots,p_{j_k})\in S(P,k)} \Phi^Q_u(\lambda)\frac{\log\lambda_{p_1}\cdots\log\lambda_{p_r}} {\log\lambda_{p_{j_1}}\cdots\log\lambda_{p_{j_k}}}.
\end{equation}
Note that $\Phi^Q_u(\lambda)$ depends only on $\rho_Q$, hence is independent of the original sequence $P$ for which $S(P,k)$ contains $Q$.  

This expression $\Psi^P_u(\lambda)$ satisfies the box operators by Proposition 7.6 and the series $\Phi_u^Q(\lambda)$ appearing in (7.11) satisfy the Euler operators for the parameter $u$ by~(7.10).  We say that $\Psi^P_u(\lambda)$ is an
{\it $r$-th order quasisolution for the parameter $u$}.  We have shown how to associate an $r$-th order quasisolution $\Psi_u^P(\lambda)$ for the parameter $u$ to each sequence $P=(p_1,\dots,p_r)\in{\mathcal P}_r$.  We call the collection $\{\Psi_u^P(\lambda)\}_{P\in{\mathcal P}_r}$ a {\it complete set of $r$-th order quasisolutions for the parameter $u$}.

\begin{proposition}
Let $\{\Psi_u^P(\lambda)\}_{P\in{\mathcal P}_r}$ be a complete set of $r$-th order quasisolutions for the parameter $u$ and let $l^{(k)}=(l^{(k)}_1,\dots,l^{(k)}_N)\in L$ for $k=1,\dots,r$.  Then the expression 
\begin{equation}
\sum_{P=(p_1,\dots,p_r)\in{\mathcal P}_r} l^{(1)}_{p_1}\cdots l^{(r)}_{p_r} \Psi^P_u(\lambda) 
\end{equation}
is a solution of the $A$-hypergeometric system with parameter $u$.
\end{proposition}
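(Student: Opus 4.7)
The plan is to check separately that the combination (7.13) is annihilated by the box operators and by the Euler operators. The box operators give no trouble: by Proposition~7.6 each $\Psi^P_u(\lambda)$ is killed by every $\Box_l$, $l\in L$, so any $\mathbb{C}$-linear combination is too.

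For the Euler operators I would use a Frobenius-type deformation. Introduce formal parameters $y_1,\dots,y_r$, set
\[ v_i := \sum_{s=1}^r y_s\, l^{(s)}_i \qquad (i=1,\dots,N), \]
and observe $\sum_i v_i{\bf a}_i=0$ since each $l^{(s)}\in L$. Consider the auxiliary series
\[ F_u(\lambda;y) := \sum_{\sum k_i{\bf a}_i=u}\;\prod_{i=1}^N [v_i]_{k_i}\lambda_i^{v_i+k_i}, \]
interpreted as a formal power series in $y$ whose coefficients involve $\log\lambda_i$'s via $\lambda_i^{v_i}=\exp(v_i\log\lambda_i)$. Using $\lambda_j\partial_j\lambda_j^{v_j+k_j}=(v_j+k_j)\lambda_j^{v_j+k_j}$ together with $\sum_j v_j{\bf a}_j=0$ and $\sum_j k_j{\bf a}_j=u$ in the inner sum, a direct computation gives $\sum_{j=1}^N{\bf a}_j\lambda_j\partial_jF_u = u\cdot F_u$. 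Thus $F_u(\lambda;y)$ is annihilated by every Euler operator with parameter $u$ for \emph{every} value of $y$, and so is the mixed derivative $\partial_{y_1}\cdots\partial_{y_r}F_u\big|_{y=0}$.

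The final step is to identify this mixed derivative with (7.13). The key input is the generating-function identity
\[ [v]_k\,t^{v+k} \;=\; \sum_{r=0}^\infty \frac{v^r}{r!}\,f^{(r)}_k(t), \]
which I would verify by showing that the right-hand side satisfies the three conditions (i)--(iii) defining $\{f^{(r)}_k\}$ in Section~7: the initial value $[v]_0t^v=e^{v\log t}$ handles (i), the derivative rule $(d/dt)([v]_kt^{v+k})=[v]_{k-1}t^{v+k-1}$ handles (ii), and (iii) follows from the explicit form of $[v]_k$. Granting this, Taylor expansion yields $\partial_v^\rho([v]_{k_i}\lambda_i^{v+k_i})\big|_{v=0}=f^{(\rho)}_{k_i}(\lambda_i)$, and applying the chain rule $\partial_{y_s}=\sum_i l^{(s)}_i\partial_{v_i}$ to $F_u$ and using the product structure in $i$ produces
\[ \partial_{y_1}\cdots\partial_{y_r}F_u\big|_{y=0} \;=\; \sum_{P\in{\mathcal P}_r} l^{(1)}_{p_1}\cdots l^{(r)}_{p_r}\sum_{\sum k_i{\bf a}_i=u}\prod_{i=1}^N f^{(\rho_P(i))}_{k_i}(\lambda_i), \]
which is exactly (7.13), since $\Psi^P_u(\lambda)=\sum_{\sum k_i{\bf a}_i=u}\prod_i f^{(\rho_P(i))}_{k_i}(\lambda_i)$ is read off from (7.2)--(7.4). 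The main obstacle is the Frobenius identity and the accompanying combinatorial bookkeeping for the mixed derivative; the Euler-invariance of the deformed series $F_u(\lambda;y)$ is immediate, and box-operator invariance is free from Proposition~7.6.
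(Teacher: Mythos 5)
Your proof is correct, but it follows a genuinely different route from the paper's. The paper proves Euler-invariance by a direct combinatorial manipulation: it substitutes the decomposition (7.11) into (7.13), reverses the order of summation over $P$ and over the subsequences $Q$, collapses the inner sums using the $N^{r-k}$ multiplicity count, and then observes that each monomial $\lambda^v\prod_i\log\lambda^{l^{(i)}}$ appearing in (7.16) is individually killed by the Euler operators (because each $l^{(i)}\in L$ makes $\log\lambda^{l^{(i)}}$ annihilated by the operators $\sum_j a_{kj}\lambda_j\partial_j$). You instead use a Frobenius-type deformation: you embed the logarithmic series as the $y$-Taylor coefficient of order $r$ at $y=0$ of the non-logarithmic, one-parameter-deformed series $F_u(\lambda;y)$, which satisfies the Euler operators for \emph{every} $y$ because each monomial $\prod_i[v_i]_{k_i}\lambda_i^{v_i+k_i}$ has the correct degree $\sum_i(v_i+k_i)\mathbf{a}_i=u$; the Euler operators commute with $\partial_{y_s}$, so invariance passes to the derivative. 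The key lemma in your version is the generating identity $[v]_kt^{v+k}=\sum_{r\geq 0}\frac{v^r}{r!}f_k^{(r)}(t)$, which is easily verified against the three defining conditions (i)--(iii) in Section~7 (and, for (iii), by expanding $t^v=e^{v\log t}$ and the rational function $[v]_k$ in powers of $v$). Your approach buys a more conceptual explanation — logarithmic solutions literally arise as $y$-derivatives of a family of monomial-type solutions, a classical Frobenius-method picture — and makes the Euler-invariance essentially free; the price is the generating-function identity, the chain-rule bookkeeping $\partial_{y_s}=\sum_il^{(s)}_i\partial_{v_i}$, and some care with the formal meaning of $F_u(\lambda;y)$ as a power series in $y$ with Laurent-plus-log coefficients. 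The paper's proof is more elementary, working purely with the explicit expansion (7.11), at the cost of a less transparent combinatorial reindexing. Both are sound.
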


\begin{proof}
The expression (7.13) satisfies the box operators because it is a linear combination of expressions $\Psi^P_u(\lambda)$ that satisfy the box operators.  We need to prove that it satisfies the Euler operators with parameter~$u$.

Substitute (7.11) into (7.13):
\begin{equation}
\sum_{P=(p_1,\dots,p_r)\in{\mathcal P}_r} l^{(1)}_{p_1}\cdots l^{(r)}_{p_r}
\sum_{k=0}^r \sum_{Q=(p_{j_1},\dots,p_{j_k})\in S(P,k)} \Phi^Q_u(\lambda)\frac{\log\lambda_{p_1}\cdots\log\lambda_{p_r}}{{\log\lambda}_{p_{j_1}}\cdots{\log\lambda}_{p_{j_k}}}.
\end{equation}
Now reverse the order of summation: For a sequence $Q=(p_{j_1},\dots,p_{j_k})\in{\mathcal P}_k$, the set of all sequences $P=(p_1,\dots,p_r)\in{\mathcal P}_r$ for which $Q\in S(P,k)$ has cardinality $N^{r-k}$: the values $p_l$ for $l\not\in\{{j_1},\dots,{j_k}\}$ can be assigned arbitrarily from $\{1,\dots,N\}$.  Expression (7.14) thus equals
\begin{multline}
\sum_{k=0}^r \sum_{1\leq j_1<\dots<j_k\leq r}\sum_{{p_{j_1}},\dots,{p_{j_k}} = 1}^N l^{(j_1)}_{p_{j_1}}\cdots l^{(j_k)}_{p_{j_k}}\Phi^{(p_{j_1},\dots,p_{j_k})}_u(\lambda)\cdot \\
\sum_{p_1,\dots\hat{p}_{j_1},\dots,\hat{p}_{j_k}\dots,p_r=1}^N l^{(1)}_{p_1}\cdots\widehat{l^{(j_1)}_{p_{j_1}}}\cdots\widehat{l^{(j_k)}_{p_{j_k}}}\cdots l^{(r)}_{p_r}\cdot \\ \log\lambda_{p_1}\cdots\widehat{\log\lambda}_{p_{j_1}}\cdots\widehat{\log\lambda}_{p_{j_k}} \cdots\log\lambda_{p_r}.
\end{multline}
Note that the innermost sum in (7.15) equals
\[ \prod_{i=1,\dots,\hat{\jmath}_1,\dots,\hat{\jmath}_k,\dots,r} \log\lambda^{l^{(i)}}, \]
so (7.15) simplifies to 
\begin{equation}
\sum_{k=0}^r \sum_{1\leq j_1<\dots<j_k\leq r}\sum_{{p_{j_1}},\dots,{p_{j_k}} = 1}^N l^{(j_1)}_{p_{j_1}}\cdots l^{(j_k)}_{p_{j_k}}\Phi^{(p_{j_1},\dots,p_{j_k})}_u(\lambda) \prod_{\substack{i=1,\dots,r\\ i\neq j_1,\dots,j_k}} \log\lambda^{l^{(i)}}. 
\end{equation}
Every monomial $\lambda^v$ appearing in some $\Phi^Q_u(\lambda)$ satisfies the Euler operators with parameter $u$, which implies by a straightforward calculation that $\lambda^v\prod_{\substack{i=1,\dots,r\\ i\neq j_1,\dots,j_k}} \log\lambda^{l^{(i)}}$ also satisfies the Euler operators with parameter $u$.  It follows that the expression (7.16) (and hence (7.13), which it equals) satisfies the Euler operators with parameter $u$.
\end{proof}

Let $l^{(k)} = (l_1^{(k)},\dots,l^{(k)}_N)\in L$ for $k=1,\dots,r$.  Let the $\Psi^P_u(\lambda)$ be as in (7.11) and let $\Psi^P(\lambda,T)$ be as in (7.4).  For $u\in M$, put
\begin{equation}
A_u = \sum_{P=(p_1,\dots,p_r)\in{\mathcal P}_r} l^{(1)}_{p_1}\cdots l^{(r)}_{p_r} \Psi^P_{-u}(\lambda) 
\end{equation}
It follows from (7.5) that $\partial_jA_u = A_{u+{\bf a}_j}$ and it follows from Proposition~7.12 that $A_u$ satisfies the Euler operators with parameter $-u$.  We therefore get the following result.
\begin{proposition}
Let $A_u$ be given by (7.17) for $u\in M$.  If all $A_u$ lie in some ${\mathcal D}$-module ${\mathcal F}$, then the series $\sum_{u\in M} A_ux^{-u}$ lies in ${\mathcal K}({\mathcal F})$.
\end{proposition}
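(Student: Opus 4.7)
The plan is to verify directly the two defining conditions for membership in $\mathcal{K}(\mathcal{F})$, namely $\partial_j(A_u) = A_{u+{\bf a}_j}$ for $j=1,\dots,N$ and $\bigl(\sum_{j=1}^N {\bf a}_j\lambda_j\partial_j\bigr)(A_u) = -uA_u$, for the $A_u$ given by (7.17). Both checks reduce immediately to results already established, so the proof should mirror that of Proposition 5.5.

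First, I would handle the differentiation condition. Because $A_u$ is defined by (7.17) as a $\mathbb{C}$-linear combination of the expressions $\Psi^P_{-u}(\lambda)$ with coefficients $l^{(1)}_{p_1}\cdots l^{(r)}_{p_r}$ that do not depend on $u$, applying $\partial_j$ term by term and invoking (7.5) (which states $\partial_j \Psi^P_u(\lambda) = \Psi^P_{u-{\bf a}_j}(\lambda)$) yields
\[
\partial_j A_u = \sum_{P=(p_1,\dots,p_r)\in\mathcal{P}_r} l^{(1)}_{p_1}\cdots l^{(r)}_{p_r}\,\Psi^P_{-u-{\bf a}_j}(\lambda) = A_{u+{\bf a}_j}.
\]

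Next, I would verify the Euler condition. This is precisely the content of Proposition 7.12: the same linear combination $\sum_P l^{(1)}_{p_1}\cdots l^{(r)}_{p_r}\,\Psi^P_{-u}(\lambda)$ that defines $A_u$ is a solution of the $A$-hypergeometric system with parameter $-u$, so it is annihilated by the Euler operator $\sum_{j=1}^N {\bf a}_j\lambda_j\partial_j - (-u) = \sum_{j=1}^N {\bf a}_j\lambda_j\partial_j + u$, which gives the required identity. Since the hypothesis of the proposition asserts that every $A_u$ lies in $\mathcal{F}$, both conditions defining $\mathcal{K}(\mathcal{F})$ hold, and the series $\sum_{u\in M} A_u x^{-u}$ belongs to $\mathcal{K}(\mathcal{F})$.

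There is no significant obstacle to this argument; all of the substantive work has already been carried out in establishing (7.5) and Proposition 7.12. The only minor subtlety is keeping track of the sign convention (the index $-u$ in (7.17) versus the index $u$ in (7.5)), which simply flips the sign appropriately in both identities and accounts for the shift $u \mapsto u+{\bf a}_j$ in the derivation formula and the parameter $-u$ in the Euler equation.
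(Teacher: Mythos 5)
Your proposal is correct and takes essentially the same route as the paper: the text immediately preceding Proposition 7.18 notes that $\partial_j A_u = A_{u+{\bf a}_j}$ follows from (7.5) and that the Euler condition with parameter $-u$ follows from Proposition 7.12, which is exactly the two-step verification you carry out.
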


Again, we apply Corollary 3.7 to get the following result.
\begin{corollary}
Suppose that $m=2n+1$ and that all $A_u$ for $u\in M$ lie in some ${\mathcal D}$-module ${\mathcal F}$.  If $n\Delta(B)$ contains no interior lattice points, then $A_u$ is algebraic over ${\mathbb C}(\lambda)$ for $u\in M^\circ$.
\end{corollary}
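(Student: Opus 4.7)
The plan is to observe that this corollary is simply the logarithmic analogue of Corollary 5.6 and that all the substantive work has already been carried out; the statement follows by combining Proposition 7.18 with Corollary 3.7 in exactly two steps.

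First I would apply Proposition 7.18 to the given $A_u$. By hypothesis, every $A_u$ (for $u \in M$) lies in the fixed ${\mathcal D}$-module ${\mathcal F}$, so Proposition 7.18 furnishes the element
\[
\xi := \sum_{u \in M} A_u x^{-u} \in {\mathcal K}({\mathcal F}).
\]
In particular, the conditions $\partial_j(A_u) = A_{u+{\bf a}_j}$ (which come from (7.5)) and the Euler-operator relations $\bigl(\sum_{j=1}^N {\bf a}_j \lambda_j \partial_j\bigr)(A_u) = -u A_u$ (which come from Proposition 7.12) are exactly the two defining conditions for membership in ${\mathcal K}({\mathcal F})$, so no extra verification is required here beyond citing Proposition 7.18.

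Second I would feed $\xi$ into Corollary 3.7. Since $m = 2n+1$ and $n\Delta(B)$ contains no interior lattice points by hypothesis, Corollary 3.7 applies and tells us that $A_u$ is algebraic over ${\mathbb C}(\lambda)$ for every $u \in M^\circ$. This is the desired conclusion. There is essentially no obstacle to overcome: all the geometric input (Theorem 1.7 via Proposition 2.11, hence via Batyrev's description of the Hodge filtration on $W_{2n}H^{2n}_{\mathrm{DR}}(X)$) is packaged inside Corollary 3.7, and all the hypergeometric/logarithmic combinatorics of building the $A_u$ is packaged inside Proposition 7.18. The only thing the proof of Corollary 7.19 has to do is record that these two statements compose cleanly, just as Corollary 5.6 was obtained by composing Proposition 5.5 with Corollary 3.7 in the non-logarithmic case.
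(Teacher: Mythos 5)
Your proof is correct and follows exactly the paper's route: the paper introduces Corollary 7.19 with the single sentence ``Again, we apply Corollary 3.7 to get the following result,'' which is precisely the composition of Proposition 7.18 (membership of $\sum_{u\in M}A_ux^{-u}$ in ${\mathcal K}({\mathcal F})$) with Corollary 3.7 (algebraicity of the $A_u$ for $u\in M^\circ$) that you describe.
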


{\bf Remark.}  Of course, under the hypothesis of Corollary 7.19, no logarithms will appear in $A_u$ for $u\in M^\circ$ (see the examples in the next section).

\section{Logarithmic examples}  

In this section, we explain how the series (1.2) arise from the logarithmic solutions described in Proposition~7.18.
For this section only, we reprise the notation from the beginning of Section~1.

Let $\alpha_1,\dots,\alpha_n,\beta_{n+1},\dots,\beta_m$ be a sequence of positive integers satisfying
\[ \sum_{i=1}^n \alpha_i = \sum_{j=n+1}^m \beta_j. \]
Throughout this section we deal with a fixed set $B$, so the slight changes we make in notation should not cause confusion.  We take $B=\{{\bf b}_0,\dots,{\bf b}_{m+1}\}\subseteq{\mathbb R}^m$ to be the following set of vectors: ${\bf b}_0 = {\bf 0}$, ${\bf b}_1,\dots,{\bf b}_m$ are the standard unit basis vectors in ${\mathbb R}^m$, and
\[ {\bf b}_{m+1} = (\alpha_1,\dots,\alpha_n,-\beta_{n+1},\dots,-\beta_m). \]  
We denote the convex hull of these points by $\Delta(\alpha,\beta)\subseteq{\mathbb R}^m$ rather than by $\Delta(B)$ as we did previously.  Put ${\bf a}_i = ({\bf b}_i,1)\in{\mathbb R}^{m+1}$ and $A=\{{\bf a}_i\}_{i=0}^{m+1}$.  We denote by $\Delta_0(\alpha,\beta)\subseteq{\mathbb R}^{m+1}$ the convex hull of $A\cup\{{\bf 0}\}$, rather than by $\Delta_0(A)$ as we did previously.  For the lattice of relations $L$ on the set $A$ we have
\begin{align*}
L & = \bigg\{ (l_0,\dots,l_{m+1})\in{\mathbb Z}^{m+2} \mid \sum_{i=0}^{m+1} l_i{\bf a}_i = {\bf 0}\bigg\} \\
 & = \{ l(-1,-\alpha_1,\dots,-\alpha_n,\beta_{n+1},\dots,\beta_m,1)\mid l\in{\mathbb Z}\}.
\end{align*}
Put $\gamma = (-1,-\alpha_1,\dots,-\alpha_n,\beta_{n+1},\dots,\beta_m,1)$ a generator of $L$.  

Put 
\[ u^{(0)} = \sum_{i=0}^n {\bf a}_i = (1,\dots,1,0,\dots,0,n+1) \]
($1$ repeated $n$ times, $0$ repeated $m-n$ times).  If we set $v_i=-1$ for $i=0,\dots,n$ and $v_i=0$ for $i=n+1,\dots,m+1$, then 
\begin{equation}
\sum_{i=0}^{m+1} v_i{\bf a}_i = -u^{(0)},
\end{equation}
\[ L_v = \{ l(-1,-\alpha_1,\dots,-\alpha_n,\beta_{n+1},\dots,\beta_m,1)\mid l\in{\mathbb Z}_{\geq 0}\}, \]
and the series $\Psi_v(\lambda)$ of (6.1) becomes
\begin{equation}
\Psi_v(\lambda) = 
\bigg(\prod_{i=0}^n \lambda_i\bigg)^{-1}\sum_{l=0}^\infty [-1]_{-l} \bigg(\prod_{i=1}^n [-1]_{-l\alpha_i}\bigg)\bigg(\prod_{j=n+1}^{m} [0]_{l\beta_j}\bigg)[0]_l\lambda^{l\gamma},
\end{equation}
or, using the definition of the symbol $[z]_k$,
\begin{equation}
 \Psi_v(\lambda) = 
\bigg(\prod_{i=0}^n \lambda_i\bigg)^{-1}\sum_{l=0}^\infty (-1)^{l(1+\sum_{i=1}^n \alpha_i)}
\frac{\prod_{i=1}^n (\alpha_il)!}{\prod_{j=n+1}^m (\beta_jl)!}\,\lambda^{l\gamma}.
\end{equation}
Our explicit description of $L$ shows that $v$ has minimal negative support, so $\Psi_v(\lambda)$ is a solution of the $A$-hypergeometric system with parameter $-u^{(0)}$.  

\begin{proof}[Proof of Proposition 1.3]
Note that the polytope $n\Delta(\alpha,\beta)$ contains no interior lattice points if and only if the polytope $(n+1)\Delta_0(\alpha,\beta)$ contains no interior lattice points.  Consider the coordinate change on ${\mathbb R}^{m+1}$ that sends $(u_1,\dots,u_m,u_{m+1})$ to $(u_1,\dots,u_m, u_{m+1}-u_1-\cdots-u_m)$.  This transforms the polytope $\Delta_0(\alpha,\beta)$ to the polytope $\tilde{\Delta}_0(\alpha,\beta)$ having vertices at the origin, the standard unit basis vectors, and the point $(\alpha_1,\dots,\alpha_n,-\beta_{n+1},\dots,-\beta_m,1)$.  And since this transformation is unimodular, the polytope $(n+1)\Delta_0(\alpha,\beta)$ contains no interior lattice points if and only if the polytope $(n+1)\tilde{\Delta}_0(\alpha,\beta)$ contains no interior lattice points.  The assertion of the proposition now follows from the case $r=1$ of \cite[Theorems~1.7 and~1.12(a)]{AS5}.
\end{proof}

It follows from \cite[Corollary 2.22]{AS5} that, aside from the trivial case where the $\alpha_i$ are a rearrangement of the $\beta_j$, the series (1.2) cannot have integral coefficients unless $m>2n$. 
\begin{proposition}
Suppose that $m>2n$ and fix $r\leq n+1$.  Then there exists a ${\mathcal D}$-module ${\mathcal F}$ that contains for all $u\in{\mathbb Z}^{m+1}$ the $r$-th order logarithmic solutions $(7.13)$ of the $A$-hypergeometric system with parameter $u$.
\end{proposition}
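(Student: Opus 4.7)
The plan is to construct $\mathcal{F}$ as a ring of the type described at the end of Section~5, specifically $\mathcal{F}=H'_C[\log\lambda_0,\ldots,\log\lambda_{m+1}]$ for a suitable strongly convex rational polyhedral cone $C\subseteq{\mathbb R}^{m+2}$. The expansion~(7.11) shows that each $\Psi^P_u(\lambda)$ is a polynomial in the $\log\lambda_i$ with coefficients that are ${\mathbb C}(\lambda)$-linear combinations of the series $\Phi^Q_u(\lambda)$ from~(7.10), so the same holds for any linear combination of the $\Psi^P_u$'s. Hence it suffices to find a single $C$ such that $\Phi^Q_u(\lambda)\in H_C$ for every $u\in{\mathbb Z}^{m+1}$ and every subsequence $Q$ of any $P\in{\mathcal P}_r$.

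The key feature of the Section~8 set-up is that the lattice of relations is one-dimensional, $L={\mathbb Z}\gamma$ with $\gamma=(-1,-\alpha_1,\ldots,-\alpha_n,\beta_{n+1},\ldots,\beta_m,1)$. For any $u\in{\mathbb Z}^{m+1}$ the equation $\sum_{i=0}^{m+1}k_i{\bf a}_i=u$ either has no integer solutions (in which case $\Phi^Q_u=0$ and there is nothing to check) or its solution set in ${\mathbb Z}^{m+2}$ is a translated line $\{k^{(0)}+l\gamma:l\in{\mathbb Z}\}$. Take $C={\mathbb R}_{\geq 0}\gamma$, which is a strongly convex one-dimensional rational polyhedral cone. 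Then $\Phi^Q_u\in H_C$ if and only if the support of $\Phi^Q_u$ along the solution line is bounded below in~$l$, and this is what I will verify.

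The heart of the argument is a support-vanishing step that uses the identity $M_{k,0}=0$ for $k<0$ built into the definitions of Section~7. The sign pattern of $\gamma$ shows that as $l\to-\infty$, the first $n+1$ coordinates of $k^{(0)}+l\gamma$ tend to $+\infty$, while the remaining $m-n+1$ coordinates $k_j$ for $j\in\{n+1,\ldots,m+1\}$ all tend to $-\infty$. For the coefficient $\prod_{i=0}^{m+1} M_{k_i,\rho_Q(i)}$ to be nonzero at such an $l$, one must therefore have $\rho_Q(j)\geq 1$ for each $j\in\{n+1,\ldots,m+1\}$, which forces $|Q|\geq m-n+1$. But $|Q|\leq r\leq n+1$, and the hypothesis $m>2n$ gives $m-n+1\geq n+2>n+1$, a contradiction. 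Consequently $\Phi^Q_u$ is supported on $\{k^{(0)}+l\gamma:l\geq l_0\}\subseteq (k^{(0)}+l_0\gamma)+C$, proving $\Phi^Q_u\in H_C$.

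Finally, $\mathcal{F}=H'_C[\log\lambda_0,\ldots,\log\lambda_{m+1}]$ is a $\mathcal{D}$-module: each derivation $\partial/\partial\lambda_j$ acts termwise on $H'_C$ and sends $\log\lambda_j$ to $\lambda_j^{-1}\in H_C\subseteq H'_C$. By the previous paragraph $\mathcal{F}$ contains every $\Phi^Q_u(\lambda)$, hence every $\Psi^P_u(\lambda)$ via~(7.11), hence every expression of the form~(7.13). The main obstacle in executing this plan is isolating the correct ``negative'' direction along the solution line and controlling the product of $M_{k,\rho}$'s in that limit; once the inequality $m-n+1>r$ supplied by $m>2n$ is in hand, the cone condition follows formally.
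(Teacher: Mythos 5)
Your proof is correct and follows essentially the same route as the paper's. Both arguments choose the one-dimensional cone $C=\mathbb{R}_{\geq 0}\gamma$ and exploit the pigeonhole observation that the index set $\{n+1,\dots,m+1\}$ has $m-n+1>n+1\geq r$ elements, so some index $j$ there must have $\rho=0$, whence $M_{k_j,0}=0$ kills all coefficients with $k_j<0$; you phrase this as a cardinality contradiction applied to each $\Phi^Q_u$ via the decomposition (7.11), while the paper applies it directly to $\Psi^P_u$ via (7.8), but the content is identical.
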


\begin{proof}
Fix $r\leq n+1$ and $u\in{\mathbb Z}^{m+1}$.  We need to show that the exponents $k$ of the monomials $\lambda^k$ that appear in (7.8) with a nonzero coefficient all lie in a translate of a certain cone with vertex at the origin.  The translate may depend on $u$, but the cone itself must be independent of~$u$.  

Let $P\in{\mathcal P}_r$ and put $Q=\{n+1,\dots,m+1\}$.  The set $Q$ has cardinality $m+1-n>n+1$.  Since $r\leq n+1$, there exists $q\in Q$ such that $\rho_P(q) = 0$.  This implies that $\rho(q) = 0$ for all $\rho\in R(\rho_P)$.  Since $M_{k_q,0}=0$ if $k_q<0$, Equation (7.8) implies that the coefficient of $\lambda^k$ in $\Psi^P_u(\lambda)$ equals $0$ if $k_q<0$.  We therefore examine when $k_q<0$ in (7.8).

Suppose that $k^{(0)} = (k^{(0)}_0,\dots,k^{(0)}_{m+1})\in{\mathbb Z}^{m+2}$ satisfies $\sum_{i=0}^{m+1} k^{(0)}_i{\bf a}_i = u$.  If $k=(k_0,\dots,k_{m+1})\in{\mathbb Z}^{m+2}$ also satisfies $\sum_{i=0}^{m+1} k_i{\bf a}_i=u$, then $k-k^{(0)}\in L$, so we may write
\begin{equation}
k = k^{(0)} + l(-1,-\alpha_1,\dots,-\alpha_n,\beta_{n+1},\dots,\beta_m,1)
\end{equation}
for some $l\in {\mathbb Z}$.  Equation (8.5) implies that for all $q\in Q$, $k_q<0$ for all but finitely many $l<0$.  It follows that if the coefficient of $\lambda^k$ in $\Psi^P_u(\lambda)$ in Eq.~(7.8) is nonzero, then $k$ lies in a translate of the real cone
\begin{equation}
\{ r(-1,-\alpha_1,\dots,-\alpha_n,\beta_{n+1},\dots,\beta_m,1)\mid r\in {\mathbb R}_{\geq 0}\}.
\end{equation}
\end{proof}

We record for later use an observation made during the proof of Proposition 8.4.
\begin{lemma}
If there is an index $i\in\{0,1,\dots,m+1\}$ for which $\rho_P(i)=0$ and $k_i<0$, then the coefficient of $\lambda^k$ in (7.8) equals $0$.
\end{lemma}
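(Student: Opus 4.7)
The plan is to read off the coefficient of $\lambda_1^{k_1}\cdots\lambda_N^{k_N}$ directly from the expansion (7.8) and to observe that a single factor in each product of the inner sum is forced to vanish. Let $i$ be the distinguished index of the hypothesis, so that $\rho_P(i)=0$ and $k_i<0$. Since $R(\rho_P)$ consists by definition of all $\rho\leq\rho_P$, the assumption $\rho_P(i)=0$ forces $\rho(i)=0$ for \emph{every} $\rho$ appearing in the inner sum. Consequently the $i$-th factor of each product in (7.8) reduces to $M_{k_i,0}$: the falling factorial $\rho_P(i)(\rho_P(i)-1)\cdots(\rho_P(i)-\rho(i)+1)$ is an empty product equal to $1$, and the log ratio $\log^{\rho_P(i)}\lambda_i/\log^{\rho(i)}\lambda_i$ is $1/1=1$.

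Next, I would invoke the explicit definition of $M_{k,j}$ recorded just before (7.1): for $k<0$ the convention explicitly states that $M_{k,j}=0$ whenever $j=0$. Applied with $k=k_i<0$ and $j=0$ this gives $M_{k_i,0}=0$, so every summand in the inner sum of (7.8) contains this vanishing factor. Hence the whole coefficient of $\lambda_1^{k_1}\cdots\lambda_N^{k_N}$ vanishes, which is the assertion of the lemma.

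There is essentially no obstacle in this argument; the only point worth flagging is the \emph{uniform} propagation of the dominance relation $\rho\leq\rho_P$ across the sum, which is precisely what guarantees that the zero factor $M_{k_i,0}$ appears in \emph{every} summand and therefore cannot be cancelled by contributions from other $\rho$.
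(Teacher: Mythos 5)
Your proof is correct and follows the same approach the paper uses: since $\rho_P(i)=0$ forces $\rho(i)=0$ for every $\rho\in R(\rho_P)$, each summand in (7.8) carries the factor $M_{k_i,0}$, which vanishes for $k_i<0$ by the explicit convention in Section~7. This is exactly the observation made in the paper's proof of Proposition~8.4, which Lemma~8.7 merely records.
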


The series (8.3) will appear in the $(n+1)$-st order logarithmic solution.  We begin with some lemmas.

A short calculation determines the inequalities defining the polytope~$\Delta(\alpha,\beta)$.  One has $(x_1,\dots,x_m)\in\Delta(\alpha,\beta)$ if and only if
\begin{equation}
x_1+\cdots+x_m\leq 1 + \min \bigg\{ 0,\frac{x_{n+1}}{\beta_{n+1}},\dots,\frac{x_m}{\beta_m} \bigg\}
\end{equation}
and
\begin{equation}
 x_i\geq -\alpha_i\min\bigg\{ 0,\frac{x_{n+1}}{\beta_{n+1}},\dots,\frac{x_m}{\beta_m} \bigg\}\quad\text{for $i=1,\dots,n$.}
\end{equation}
A further calculation using (8.8) and~(8.9) establishes the following result.
\begin{lemma}
Suppose $m>2n$ and let $Q\subseteq\{0,\dots,m+1\}$ be a subset of cardinality $\leq n+1$.  If $Q\neq \{0,\dots,n\}$, then the ${\bf b}_q$ for $q\in Q$ all lie on a common face of $\Delta(\alpha,\beta)$.
\end{lemma}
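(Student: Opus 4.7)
The plan is to extract an explicit list of supporting hyperplanes of $\Delta(\alpha,\beta)$ from the defining inequalities (8.8)--(8.9), and then case-split according to which of the ``special'' indices $0$ and $m+1$ belong to $Q$.

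Unfolding the minima in (8.8) and (8.9) exhibits four families of supporting hyperplanes of $\Delta(\alpha,\beta)$: $H_0:\ \sum_{k=1}^m x_k=1$; $H_j:\ \beta_j\sum_{k=1}^m x_k - x_j=\beta_j$ for $j\in\{n+1,\dots,m\}$; $F_i:\ x_i=0$ for $i\in\{1,\dots,n\}$; and $G_{ij}:\ \beta_j x_i+\alpha_i x_j=0$ for $i\in\{1,\dots,n\},\, j\in\{n+1,\dots,m\}$. Direct substitution (using $\sum_i\alpha_i=\sum_j\beta_j$ for ${\bf b}_{m+1}$) shows: $H_0$ contains precisely ${\bf b}_1,\dots,{\bf b}_m$; $H_j$ contains ${\bf b}_{m+1}$ together with the ${\bf b}_k$ for $k\in\{1,\dots,m\}\setminus\{j\}$; $F_i$ contains ${\bf b}_0$ together with the ${\bf b}_k$ for $k\in\{1,\dots,m\}\setminus\{i\}$; and $G_{ij}$ contains ${\bf b}_0,\,{\bf b}_{m+1}$, and the ${\bf b}_k$ for $k\in\{1,\dots,m\}\setminus\{i,j\}$. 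The four families are thus distinguished by exactly which of $0,\,m+1$ they contain.

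I would then case-split on which of $0,\,m+1$ belong to $Q$. \textbf{(i)} If neither is in $Q$, then $Q\subseteq\{1,\dots,m\}$ and the face cut out by $H_0$ works. \textbf{(ii)} If $m+1\in Q$ and $0\notin Q$, then $|Q\cap\{n+1,\dots,m\}|\le n<m-n$ (using $m>2n$), so we can pick $j\in\{n+1,\dots,m\}\setminus Q$ and $H_j$ works. \textbf{(iii)} If $0\in Q$ and $m+1\notin Q$, the exclusion $Q\ne\{0,\dots,n\}$ together with $|Q|\le n+1$ forces $|Q\cap\{1,\dots,n\}|<n$, so we can pick $i\in\{1,\dots,n\}\setminus Q$ and $F_i$ works. \textbf{(iv)} If both $0$ and $m+1$ lie in $Q$, then $|Q\cap\{1,\dots,m\}|\le n-1$, so one can pick $i\in\{1,\dots,n\}\setminus Q$ and $j\in\{n+1,\dots,m\}\setminus Q$ (again using $m>2n$ for the latter), and $G_{ij}$ works.

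The main bookkeeping step is the verification of which vertices lie on each hyperplane, which is routine. The only subtle point is case \textbf{(iii)}: the exclusion $Q\ne\{0,\dots,n\}$ is exactly what prevents the degenerate configuration in which $Q$ saturates $\{1,\dots,n\}$ and no $F_i$ could contain $Q$. The hypothesis $m>2n$ enters only in cases \textbf{(ii)} and \textbf{(iv)}, where it guarantees that $\{n+1,\dots,m\}$ is large enough to admit an index outside $Q$.
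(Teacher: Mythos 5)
Your proof is correct, and it is precisely the ``further calculation using (8.8) and (8.9)'' that the paper invokes without writing out: the four families of supporting hyperplanes you extract are exactly the facet-type inequalities that unfold from the two minima, and the bookkeeping of which ${\bf b}_q$ lie on each, together with the four-way case split on membership of $0$ and $m+1$ in $Q$, is the intended argument. Your isolation of case (iii) as the place where $Q\neq\{0,\dots,n\}$ is used, and of cases (ii) and (iv) as the places where $m>2n$ is used, is accurate and a useful clarification of why both hypotheses are needed.
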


We apply this lemma to get information about interior lattice points of $C(A)$.  
\begin{lemma}
Suppose that $\sum_{i=0}^{m+1} k_i{\bf a}_i$ (where the $k_i$ are integers) is an interior lattice point of $C(A)$.  Set
\[ Q=\{ q\in\{0,\dots,m+1\}\mid k_q>0\}. \]
If $m>2n$, then $Q=\{0,1,\dots,n\}$ or ${\rm card}(Q)>n+1$.
\end{lemma}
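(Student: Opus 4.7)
The strategy is proof by contradiction: assume $Q \neq \{0, 1, \ldots, n\}$ and $\operatorname{card}(Q) \leq n+1$, and derive that $\sum_{i=0}^{m+1} k_i {\bf a}_i$ lies on a proper face of $C(A)$, contradicting the hypothesis that it is an interior point.

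First I would dispose of the case $Q = \emptyset$: then every $k_i \leq 0$, so $\sum_{i=0}^{m+1} k_i {\bf a}_i$ has last coordinate $\sum k_i \leq 0$. But $C(A)$ lies in the half-space $\{x_{m+1} \geq 0\}$ (since every ${\bf a}_i$ has last coordinate $1$) and its interior lies in $\{x_{m+1} > 0\}$, so this case cannot occur. With $Q$ nonempty and satisfying the assumed bounds, Lemma 8.10 supplies a common face $F$ of $\Delta(\alpha, \beta)$ containing the ${\bf b}_q$ for $q \in Q$. The face $F$ is proper, since $\dim F \leq \operatorname{card}(Q) - 1 \leq n$ while $\dim \Delta(\alpha, \beta) = m > 2n \geq n$. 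Lifting through the slice $\{x_{m+1} = 1\} \cap C(A) = \Delta(\alpha,\beta)$, the face $F$ corresponds to a proper face $F^{*}$ of the cone $C(A)$, and all ${\bf a}_q = ({\bf b}_q, 1)$ for $q \in Q$ lie in $F^{*}$.

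Now I would split the defining equation according to the sign of the $k_i$:
\[ \sum_{q \in Q} k_q {\bf a}_q \;=\; \sum_{i=0}^{m+1} k_i {\bf a}_i \;+\; \sum_{\substack{i \notin Q \\ k_i < 0}} (-k_i) {\bf a}_i. \]
The left-hand side is a nonnegative linear combination of vectors in $F^{*}$, hence lies in $F^{*}$. The right-hand side is the interior point $\sum_i k_i {\bf a}_i$ plus a nonnegative combination of elements of $C(A)$; this sum is again interior to $C(A)$ because the interior of a convex cone is preserved under addition of any cone element (if $B_\varepsilon(u) \subseteq C(A)$ and $v \in C(A)$, then $B_\varepsilon(u+v) = B_\varepsilon(u) + v \subseteq C(A)$). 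A point cannot simultaneously lie on a proper face and in the interior, which is the desired contradiction.

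The only substantive input is Lemma 8.10, which is already in hand; the properness of $F$ and the ``interior plus cone is interior'' assertion are elementary convex-geometric facts, so I do not anticipate any serious obstacle beyond organizing the case analysis on whether $Q$ is empty.
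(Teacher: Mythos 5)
Your proof is correct and follows essentially the same route as the paper: both rest on the decomposition $\sum_{q\in Q}k_q{\bf a}_q = \sum_i k_i{\bf a}_i + \sum_{j\notin Q}(-k_j){\bf a}_j$, the fact that adding cone elements preserves interiority, and Lemma~8.10. One small caveat: the inequality $\dim F \leq \mathrm{card}(Q)-1$ is not generally true (the smallest face of a polytope containing a given set of vertices can have dimension larger than the affine span would suggest, e.g.\ two opposite vertices of a square), but this doesn't hurt you since Lemma~8.10 is only nonvacuous if ``face'' means \emph{proper} face, so properness of $F$ comes for free without the dimension count.
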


\begin{proof}
Since $\sum_{i=0}^{m+1} k_i{\bf a}_i$ is an interior lattice point of $C(A)$, so is
\[ \sum_{i=0}^{m+1}k_i{\bf a}_i + \sum_{j\not\in Q} (-k_j){\bf a}_j = \sum_{q\in Q} k_q{\bf a}_q \]
(since $-k_j\geq 0$ for $j\not\in Q$).  Since $\sum_{q\in Q}k_q{\bf a}_q$ is an interior lattice point of~$C(A)$, so is $\sum_{q\in Q} {\bf a}_q$.  But by Lemma~8.10, if ${\rm card}(Q)\leq n+1$ and $Q\neq \{0,1,\dots,n\}$, then the ${\bf a}_q$ for all $q\in Q$ lie on a common face of~$C(A)$, in which case $\sum_{q\in Q} {\bf a}_q$ would not be an interior lattice point of $C(A)$.
\end{proof}

We now consider the $(n+1)$-st order quasisolutions of the $A$-hyper\-ge\-o\-met\-ric system described in Proposition 7.12.  We use Lemma~8.11 to describe the series $\Psi_{-u}^P(\lambda)$ appearing in (7.17) for $u\in M^\circ$ when $P\in{\mathcal P}_{n+1}$.  

\begin{lemma}
Suppose that $m>2n$.  Let $P\in{\mathcal P}_{n+1}$ and suppose that $P$ is not a rearrangement of the sequence $(0,1,\dots,n)$.  Then
$\Psi^P_{-u}(\lambda)=0$ for all $u\in M^\circ$, where $\Psi^P_{-u}(\lambda)$ is given in (7.8).
\end{lemma}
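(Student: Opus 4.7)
The plan is to combine Lemmas 8.7 and 8.11 to show that every monomial in the sum (7.8) defining $\Psi^P_{-u}(\lambda)$ has vanishing coefficient. Write $k=(k_0,\dots,k_{m+1})\in\mathbb{Z}^{m+2}$ for a multi-index with $\sum k_i{\bf a}_i = -u$, and let $\mathrm{supp}(\rho_P) = \{i : \rho_P(i)>0\}$, which has cardinality at most $n+1$ since $P\in\mathcal{P}_{n+1}$.

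First I would invoke Lemma 8.7: if $\rho_P(i)=0$ and $k_i<0$ for some $i$, the coefficient of $\lambda^k$ in (7.8) is zero. So any $k$ contributing a nonzero term must satisfy $\{i : k_i<0\}\subseteq \mathrm{supp}(\rho_P)$.

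Next, I would change sign: set $k_i' = -k_i$, so $\sum k_i'{\bf a}_i = u$. Since $u\in M^\circ$, Lemma 8.11 (which requires $m>2n$, as given) applied to $(k_0',\dots,k_{m+1}')$ tells us that the set
\[
Q := \{i : k_i'>0\} = \{i : k_i<0\}
\]
equals either $\{0,1,\dots,n\}$ or has cardinality greater than $n+1$. Because $Q\subseteq\mathrm{supp}(\rho_P)$ and $|\mathrm{supp}(\rho_P)|\leq n+1$, the second alternative is ruled out, so $Q=\{0,1,\dots,n\}$.

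Finally, the chain $\{0,1,\dots,n\}=Q\subseteq \mathrm{supp}(\rho_P)$ together with $|\mathrm{supp}(\rho_P)|\leq n+1$ forces $\mathrm{supp}(\rho_P)=\{0,1,\dots,n\}$ and $\rho_P(i)=1$ for each $i\in\{0,\dots,n\}$; that is, $P$ is a rearrangement of $(0,1,\dots,n)$, contradicting the hypothesis. Hence no contributing $k$ exists and $\Psi^P_{-u}(\lambda)=0$. There is no real obstacle: the whole argument is a direct match of Lemma 8.7 (which controls which coordinates of $k$ may be negative) with Lemma 8.11 (which controls which coordinates of a representation of an interior point may be positive). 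The only care required is bookkeeping with the sign flip $k\mapsto -k$ and remembering that in this section indices run from $0$ to $m+1$, so $N=m+2$.
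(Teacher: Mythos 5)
Your proposal is correct and follows essentially the same argument as the paper: apply Lemma 8.7 to force $\{i : k_i<0\}\subseteq\{i : \rho_P(i)>0\}$, flip the sign to invoke Lemma 8.11 on $\sum(-k_i)\mathbf{a}_i = u\in M^\circ$, and conclude that the support must be $\{0,1,\dots,n\}$, contradicting the hypothesis on $P$. You have merely spelled out the final counting step (that $|\mathrm{supp}(\rho_P)|\leq n+1$ together with $\mathrm{supp}(\rho_P)\supseteq\{0,\dots,n\}$ forces $\rho_P\equiv 1$ on $\{0,\dots,n\}$) a bit more explicitly than the paper does.
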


\begin{proof}
Let $u\in M^\circ$.  By (7.8), the series $\Psi_{-u}^P(\lambda)$ is a sum over $k_0,\dots,k_{m+1}\in {\mathbb Z}$ satisfying $\sum_{i=0}^{m+1} k_i{\bf a}_i = -u$.  By Lemma~8.7, we can restrict this sum to monomials $\lambda^k$ satisfying
$k_i<0$ only if $\rho_P(i)\neq 0$.  Rewriting this equation as
\begin{equation}
 \sum_{i=0}^{m+1} (-k_i){\bf a}_i = u,
\end{equation}
we have $-k_i>0$ for at most $n+1$ values of $i$ since $P\in{\mathcal P}_{n+1}$.  Lemma~8.11 implies that (8.13) is impossible unless $P$ is, up to reordering, the sequence $(0,1,\dots,n)$.
\end{proof}

Lemma 8.12 simplifies the $(n+1)$-st order logarithmic solutions described in Proposition~7.12 when $u\in M^\circ$.
\begin{corollary}
If $m>2n$ and $u\in M^\circ$, then $\Psi_{-u}^{(0,1,\dots,n)}(\lambda)$ is a solution of the $A$-hypergeometric system with parameter $-u$.
\end{corollary}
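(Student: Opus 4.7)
The plan is to derive this corollary as a special case of Proposition~7.12, taking $r=n+1$, parameter $-u$, and a single judicious choice of the $n+1$ relations $l^{(k)}\in L$. The key observation is that by Lemma~8.12, for $u\in M^\circ$ every term $\Psi^P_{-u}(\lambda)$ with $P\in{\mathcal P}_{n+1}$ already vanishes except those indexed by permutations of the sequence $(0,1,\dots,n)$. Since $\Psi^P_{-u}(\lambda)$ depends only on the multiplicity function $\rho_P$, the $(n+1)!$ surviving terms are all literally equal to $\Psi^{(0,1,\dots,n)}_{-u}(\lambda)$.

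Concretely, I would take $l^{(1)}=l^{(2)}=\cdots=l^{(n+1)}=\gamma$, where $\gamma=(-1,-\alpha_1,\dots,-\alpha_n,\beta_{n+1},\dots,\beta_m,1)$ is the generator of $L$ recorded near the beginning of the section. For each permutation $P=(p_1,\dots,p_{n+1})$ of $(0,1,\dots,n)$, the coefficient $\gamma_{p_1}\cdots\gamma_{p_{n+1}}$ collapses to the permutation-invariant product $\gamma_0\gamma_1\cdots\gamma_n=(-1)^{n+1}\alpha_1\cdots\alpha_n$, which is nonzero since each $\alpha_i$ is a positive integer. Summing over the $(n+1)!$ such $P$ and invoking Proposition~7.12 then produces
\[ (n+1)!\,(-1)^{n+1}\alpha_1\cdots\alpha_n\cdot\Psi^{(0,1,\dots,n)}_{-u}(\lambda) \]
as a solution of the $A$-hypergeometric system with parameter $-u$; dividing by this nonzero scalar yields the stated conclusion.

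There is no real obstacle in this argument: Lemma~8.12 performs all the heavy lifting by collapsing the a priori sprawling sum~(7.13) to a single quasisolution (up to a harmless combinatorial constant), and Proposition~7.12 then automatically upgrades that quasisolution to a genuine $A$-hypergeometric solution satisfying the Euler operators as well as the box operators. The only thing to check carefully is the bookkeeping in the coefficient calculation, namely that $\gamma_0\gamma_1\cdots\gamma_n$ really is independent of the permutation and nonzero, both of which are immediate.
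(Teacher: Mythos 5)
Your proposal is correct and follows essentially the same approach as the paper: apply Proposition~7.12 with $r=n+1$, use Lemma~8.12 to kill all terms $\Psi^P_{-u}(\lambda)$ except those with $P$ a rearrangement of $(0,1,\dots,n)$, and observe that these all equal $\Psi^{(0,1,\dots,n)}_{-u}(\lambda)$. The paper leaves implicit the choice of the relations $l^{(k)}$; your choice $l^{(1)}=\cdots=l^{(n+1)}=\gamma$ and the verification that the resulting scalar $(n+1)!\,(-1)^{n+1}\alpha_1\cdots\alpha_n$ is nonzero is the natural way to fill that in.
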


We can now apply Proposition 8.4 and Corollary 7.19 to conclude the following.
\begin{corollary}
Suppose that $m=2n+1$ and $n\Delta(\alpha,\beta)$ contains no interior lattice points.  Then $\Psi_{-u}^{(0,1,\dots,n)}(\lambda)$ is algebraic over ${\mathbb C}(\lambda)$ for all $u\in M^\circ$.
\end{corollary}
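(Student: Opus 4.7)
The plan is to deduce the corollary from Corollary~7.19 applied to a carefully chosen ``collapsing'' instance of the $(n+1)$-st order logarithmic solutions (7.17). The idea is that Lemma~8.12 already annihilates most summands when $u\in M^\circ$, so with the right choice of lattice vectors $l^{(k)}\in L$ the series $A_u$ of (7.17) becomes a nonzero scalar multiple of $\Psi_{-u}^{(0,1,\dots,n)}(\lambda)$, from which algebraicity of the latter follows.

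First, since $m=2n+1>2n$, apply Proposition~8.4 with $r=n+1$ to obtain a ${\mathcal D}$-module ${\mathcal F}$ containing every series $\Psi^P_w(\lambda)$ for $P\in{\mathcal P}_{n+1}$ and $w\in{\mathbb Z}^{m+1}$. Recall that $L$ is generated by
\[
\gamma=(-1,-\alpha_1,\dots,-\alpha_n,\beta_{n+1},\dots,\beta_m,1),
\]
whose coordinates $\gamma_0,\gamma_1,\dots,\gamma_n$ are all nonzero. Choose $l^{(1)}=\cdots=l^{(n+1)}=\gamma$ and define, for each $u\in M$,
\[
A_u=\sum_{P=(p_1,\dots,p_{n+1})\in{\mathcal P}_{n+1}}\gamma_{p_1}\cdots\gamma_{p_{n+1}}\,\Psi^P_{-u}(\lambda).
\]
Each $A_u$ is a finite linear combination of $\Psi^P_{-u}(\lambda)$'s and therefore lies in ${\mathcal F}$. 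By Proposition~7.18 the formal series $\sum_{u\in M}A_u x^{-u}$ lies in ${\mathcal K}({\mathcal F})$, and Corollary~7.19 (applicable because $m=2n+1$ and $n\Delta(\alpha,\beta)$ contains no interior lattice points) yields that $A_u$ is algebraic over ${\mathbb C}(\lambda)$ for every $u\in M^\circ$.

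Next, fix $u\in M^\circ$ and compute $A_u$ explicitly. By Lemma~8.12 the summand indexed by $P$ vanishes unless $P$ is a permutation of $(0,1,\dots,n)$. Since the series $\Psi^P_{-u}(\lambda)$ depends only on the multiplicity function $\rho_P$, each of the $(n+1)!$ permutations contributes the same series $\Psi_{-u}^{(0,1,\dots,n)}(\lambda)$, weighted by the coefficient $\gamma_0\gamma_1\cdots\gamma_n=(-1)^{n+1}\alpha_1\cdots\alpha_n$. Hence
\[
A_u=(n+1)!\,(-1)^{n+1}\alpha_1\cdots\alpha_n\,\Psi_{-u}^{(0,1,\dots,n)}(\lambda),
\]
a nonzero scalar multiple of the target series; dividing by this constant gives the algebraicity claim.

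There is no real obstacle beyond verifying the collapse of the sum: the key insight (and the only nontrivial point) is that the generator $\gamma$ has nonzero coordinates at precisely the indices $0,1,\dots,n$ occurring in the surviving permutations, so the constant in front of $\Psi_{-u}^{(0,1,\dots,n)}(\lambda)$ is automatically nonzero. The rest is simply an invocation of the machinery already assembled in Sections~7 and~8.
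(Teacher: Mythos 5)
Your proof is correct and takes essentially the same approach as the paper's (which merely says ``apply Proposition~8.4 and Corollary~7.19''): form the $(n+1)$-st order logarithmic solution $A_u$ of~(7.17), place it in a ${\mathcal D}$-module via Proposition~8.4, invoke Corollary~7.19 for algebraicity when $u\in M^\circ$, and then use Lemma~8.12 plus the fact that $\Psi^P_{-u}$ depends only on $\rho_P$ to collapse $A_u$ to a scalar multiple of $\Psi^{(0,1,\dots,n)}_{-u}(\lambda)$. You make explicit what the paper leaves implicit: since $L$ is rank one, generated by $\gamma$, the only real choice is $l^{(k)}=c_k\gamma$, and taking $l^{(k)}=\gamma$ yields the scalar $(n+1)!\,\gamma_0\gamma_1\cdots\gamma_n=(-1)^{n+1}(n+1)!\,\alpha_1\cdots\alpha_n\neq 0$, completing the identification.
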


Put $P_0 = (0,1,\dots,n)$.  To complete this section, we give the explicit formula for $\Psi_{-u}^{P_0}(\lambda)$ when $u\in M^\circ$ by examining the coefficients of $\lambda^k$ in Eq.~(7.8).

Fix $k=(k_1,\dots,k_N)\in{\mathbb Z}^N$ satisfying
\[ \sum_{i=1}^N k_i{\bf a}_i = -u, \]
i.~e.,
\[ \sum_{i=1}^N (-k_i){\bf a}_i = u\in M^\circ. \]
By Lemma 8.11 there are two possibilities: either $-k_i>0$ for $i=0,\dots,n$ and $-k_i\leq 0$ for $i>n$ or
\[ {\rm card}\{i\mid -k_i>0\}>n+1. \]
In the latter case we must have $-k_i>0$ for some $i>n$, i.~e., $k_i<0$ for some $i$ with $\rho_{P_0}(i) = 0$.  Lemma~8.7 then implies that the coefficient of $\lambda^k$ equals~0.

We may therefore assume that we are in the case where $-k_i>0$ for $i=0,\dots,n$ and $-k_i\leq 0$ for $i>n$.  We now examine the sum over $\rho\in R(\rho_{P_0})$ that appears in~(7.8).  

We have $\rho_{P_0}(i)=1$ for $i\in P_0$ and $\rho_{P_0}(i) = 0$ for $i\not\in P_0$.  So if $\rho\in R(\rho_{P_0})$, then $\rho(i)=1$ or $\rho(i)=0$ for $i\in P_0$ and $\rho(i)=0$ for $i\not\in P_0$.  Since $M_{k_i,0}=0$ for $k_i<0$ and $k_i<0$ for $i=0,\dots,n$, the only element $\rho$ of $R(\rho_{P_0})$ that makes a nonzero contribution to the coefficient of $\lambda^k$ in Eq.~(7.8) is $\rho=\rho_{P_0}$.  That contribution is
\[ \prod_{i=0}^{m+1} M_{k_i,\rho_{P_0}(i)} = \prod_{i=0}^n M_{k_i,1} \cdot \prod_{j=n+1}^{m+1} M_{k_j,0} \]
(in particular, no logarithms appear).
From the definition of the $M_{k,i}$, this equals
\[ (-1)^{n+1 - \sum_{i=0}^n k_i}\frac{\prod_{i=0}^n (-k_i-1)!}{\prod_{j=n+1}^{m+1}k_j!}, \]
hence
\begin{equation}
\Psi_{-u}^{P_0}(\lambda) = 
\sum_{\substack{k_0,k_1,\dots,k_n\in{\mathbb Z}_{<0}\\ k_{n+1},\dots,k_{m+1}\in{\mathbb Z}_{\geq 0}\\ \sum_{i=0}^{m+1} k_i{\bf a}_i = -u}} (-1)^{n+1 - \sum_{i=0}^n k_i}\frac{\prod_{i=0}^n (-k_i-1)!}{\prod_{j=n+1}^{m+1}k_j!}\lambda^k.
\end{equation}

We may then restate Corollary 8.15 in an explicit form.
\begin{corollary}
Suppose that $m=2n+1$ and $n\Delta(\alpha,\beta)$ contains no interior lattice points.  Then the series (8.16) is algebraic over ${\mathbb C}(\lambda)$ for all $u\in M^\circ$.
\end{corollary}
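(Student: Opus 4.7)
The approach is to combine Corollary 8.15 with the explicit computation carried out immediately before the statement. Corollary 8.15 already asserts the algebraicity of $\Psi_{-u}^{P_0}(\lambda)$ for $u\in M^\circ$ under the present hypotheses, so the only remaining task is to verify that the series (8.16) coincides with $\Psi_{-u}^{P_0}(\lambda)$. The calculation preceding the statement has essentially done this; what I would present as the proof is a clean formalization of that calculation.

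Concretely, I would start from the general expression (7.8) for $\Psi_{-u}^{P_0}(\lambda)$ and narrow its support in two stages. First, rewriting the defining equation $\sum k_i{\bf a}_i = -u$ as $\sum(-k_i){\bf a}_i = u$ and applying Lemma 8.11 to the index set $Q = \{i : -k_i > 0\}$, the alternative $\mathrm{card}(Q)\geq n+2$ forces some $i>n$ with $k_i<0$. Since $\rho_{P_0}(i)=0$ for $i>n$, Lemma 8.7 then annihilates the coefficient of $\lambda^k$ in such terms. Only exponent vectors with $k_0,\dots,k_n<0$ and $k_{n+1},\dots,k_{m+1}\geq 0$ survive in the sum.

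Next I would collapse the inner sum over $\rho\in R(\rho_{P_0})$. Because $\rho_{P_0}(i)=1$ for $i\leq n$ and $0$ otherwise, any $\rho\leq\rho_{P_0}$ with $\rho(i)=0$ at some $i\leq n$ contributes a factor $M_{k_i,0}$ with $k_i<0$, which vanishes by the definition of $M_{k,i}$. Hence only $\rho=\rho_{P_0}$ survives, and the product $\prod_i M_{k_i,\rho_{P_0}(i)}$ is evaluated directly from the case formulas: $M_{k_i,1}=(-1)^{-k_i-1}(-k_i-1)!$ for $i\leq n$, and $M_{k_j,0}=1/k_j!$ for $j>n$ (with the convention $M_{0,0}=1=1/0!$). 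Aggregating the signs yields $(-1)^{n+1-\sum_{i=0}^n k_i}$, and the factorial product matches the coefficient in (8.16) exactly.

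I foresee no conceptual obstacle, and not even a serious computational one. The only step that needs a little care is tracking the sign $(-1)^{-k_i-1}$ for negative $k_i$, which follows from the definition of $M_{k,1}$ using the empty-product convention ($i-1=0$ in the nested sum). Once the identification $\Psi_{-u}^{P_0}(\lambda)=(8.16)$ is established, Corollary 8.17 is a direct consequence of Corollary 8.15.
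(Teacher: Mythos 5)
Your proposal is correct and matches the paper's own approach: the paper presents Corollary 8.17 as a restatement of Corollary 8.15, justified by the computation of (8.16) that immediately precedes it, and that computation is exactly the two-stage support reduction (Lemma 8.11 plus Lemma 8.7 to restrict the exponent vectors, then collapsing the inner $\rho$-sum to $\rho=\rho_{P_0}$ and evaluating the $M_{k_i,\rho_{P_0}(i)}$) that you carry out. Your sign bookkeeping and factorial evaluations agree with the paper's.
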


{\bf Example.}  The series (1.2) can be recovered as a special case of (8.16).  Take
\[ u^{(0)}=\sum_{i=0}^n {\bf a}_i = (1,\dots,1,0,\dots,0,n+1) \]
($1$ repeated $n$ times, $0$ repeated $m-n$ times).  Equations (8.8) and (8.9) imply that $u^{(0)}$ is an interior lattice point of $C(A)$.  One checks that $(k_0,\dots,k_{m+1})$ satisfies the conditions of the summation on the right-hand side of (8.16) if and only if it lies in the set
\[ \{(-l-1,-l\alpha_1-1,\dots,-l\alpha_n-1,l\beta_{n+1},\dots,l\beta_{m+1},l)\mid l\in{\mathbb Z}_{\geq 0}\}. \]
For $u=u^{(0)}$, the series (8.16) thus becomes the series (8.3).  Corollary~8.17 now implies that the series (1.2) is an algebraic function when $m=2n+1$ and $n\Delta(\alpha,\beta)$ contains no interior lattice points, the observation that originally motivated this work.

{\bf Remark.}  Corollary 8.14 implies that if $m>2n$ and $u\in M^\circ$, then the series (8.16) is a solution of the $A$-hypergeometric system with parameter $-u$.  We conjecture that if in addition $n\Delta(\alpha,\beta)$ contains no interior lattice points, then the series (8.16) has integral coefficients.  Proposition~1.3 shows that this assertion holds for the special case $u=u^{(0)}$ of the preceding example that gives the series (8.3).

\section{Appendix: Algebraic Solutions, \rm by Nicholas M. Katz}

The $X/\C(\lambda)$ of the paper is the generic fibre of the family $\X$ of hypersurfaces  $\sum_i \lambda_i x^{{\bf b}_i}=0$ viewed over
the affine space $\A^N/\C$ of the $\lambda_i$.  This generic fibre is smooth and geometrically connected over $\C(\lambda)$. At the expense of extending scalars from $\C(\lambda)$ to a finite extension field $K/\C(\lambda)$, by Hironaka\cite{H} there exists a projective smooth
$\overline{X}/K$ with geometrically connected fibres, and a normal crossing divisor $D = \cup_j D_j$ in $\overline{X}/K$ such that $X\otimes_{\C(\lambda)}K$ is $\overline{X} \setminus \cup_j D_j$. By the usual ``spreading out" argument, there exists a
dense open set $U \subset \A^N/\C$ over which $\X$ is smooth, say $$ f:\X_U \rightarrow U,$$ and
over which each de Rham cohomology sheaf $H^i_{DR}(\X/U)$
is locally free of finite rank on $U$, and there exists a finite etale connected covering $\E \rightarrow U$ with $K$ the function field of $\E$, and a projective smooth $\overline{\X}/\E$ with a normal crossing divisor $\D = \cup_j \D_j$ over $\E$ such that, over $\E$,  $\X_{\E}:=\X_{U}\otimes\E$ is $\overline{\X} \setminus 
 \cup_j \D_j$. 

In this setting, for each integer $i \geq 0$, the de Rham cohomology sheaf $H^i_{DR}(\X_{\E}/\E)$ is the de Rham incarnation of a mixed Hodge structure on $\E$, whose weight filtration begins as
 $$W_{i-1}H^i_{DR}(\X_{\E}/\E) = 0,$$
  $$W_iH^i_{DR}(\X_{\E}/\E) = {\rm the \ image \ of \ } H^i_{DR}(\overline{\X}/\E).$$
In particular, 
$$W_iH^i_{DR}(\X_{\E}/\E) ={\rm gr}_i^W(H^i_{DR}(\X_{\E}/\E) )$$
is the de Rham incarnation of a polarized variation of pure Hodge structure.

\begin{theorem} Suppose that $i$ is even, $i=2n$, and suppose the Hodge filtration on $W_{2n}H^{2n}_{DR}(\X_{\E}/\E)$ is given by
$$F^i  =W_{2n}H^{2n}_{DR}(\X_{\E}/\E) {\rm \  for \ } i \leq n, F^i  = 0 {\rm \  for \ }i > n.$$
Then  $W_{2n}H^{2n}_{DR}(\X_{\E}/\E)$ is the de Rham incarnation of a variation of pure Hodge structure which becomes constant on a finite etale covering of $\E$. 
\end{theorem}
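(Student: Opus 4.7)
The plan is (i) to show that the monodromy representation of the underlying $\mathbb{Q}$-local system of $W_{2n}H^{2n}_{DR}(\X_\E/\E)$ has finite image, and then (ii) to pass to the finite etale cover of $\E$ trivializing monodromy and observe that the period domain for Hodge structures of pure type $(n,n)$ is a single point, so the Hodge filtration is also forced to be constant.

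For (i), write $V$ for a fiber of the underlying $\mathbb{Q}$-local system, with polarization $Q\colon V\otimes V\to\mathbb{Q}$ and integral lattice $V_{\mathbb{Z}}\subset V_{\mathbb{R}}$. The hypothesis on the Hodge filtration forces $V_\C = V^{n,n}$ at every point. The positivity axiom for a polarization of a pure Hodge structure of weight $2n$ asserts $i^{p-q}Q(v,\bar v)>0$ for nonzero $v\in V^{p,q}$; in the only nonzero bidegree $(n,n)$ this collapses to $Q(v,\bar v)>0$ (up to a fixed global sign), so $Q$ restricts to a definite real symmetric form on $V_{\mathbb{R}}$. The monodromy preserves $Q$ and the lattice $V_{\mathbb{Z}}$, so its image lies in $GL(V_{\mathbb{Z}})\cap O(V_{\mathbb{R}},Q)$. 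Because $O(V_{\mathbb{R}},Q)$ is compact (as $Q$ is definite) and $GL(V_{\mathbb{Z}})$ is discrete in $GL(V_{\mathbb{R}})$, this intersection is finite.

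For (ii), let $\pi\colon \tilde\E\to\E$ be the connected finite etale cover corresponding to the kernel of the monodromy representation (its existence as an algebraic etale cover follows from the Riemann existence theorem, since $\E$ is of finite type over $\C$). On $\tilde\E$ the underlying local system of $\pi^{*}W_{2n}H^{2n}_{DR}(\X_\E/\E)$ is constant. Since the only nonzero Hodge number is $h^{n,n}=\dim V$, the period domain parametrizing admissible Hodge filtrations is a single point: the filtration is forced to be $F^i=V_\C$ for $i\le n$ and $F^i=0$ for $i>n$. Hence the Hodge filtration is also constant on $\tilde\E$, and the entire VHS pulls back to a constant VHS on $\tilde\E$.

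I do not expect a major obstacle: the argument is the standard \emph{finite image} consequence of Hodge--Riemann positivity, plus the fact that discrete subgroups of compact Lie groups are finite. The only delicate points are tracking sign conventions in the Hodge--Riemann bilinear relations (to make sure that the polarization supplied on $\mathrm{gr}_{2n}^WH^{2n}$ really gives a definite form in the bidegree $(n,n)$) and verifying that the finite analytic etale cover corresponding to the monodromy kernel is algebraic, which is immediate from Riemann existence for finite-type $\C$-schemes.
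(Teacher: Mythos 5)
Your proof is correct. The paper's ``proof'' of Theorem~9.1 is a one-line citation to Katz~\cite[Proposition~4.2.1.3]{K}, and the introduction of the present paper describes Katz's contribution as the theorem that for Gauss--Manin connections a full set of solutions mod~$p$ for almost all~$p$ implies a full set of algebraic solutions. That is, the cited route runs through the $p$-curvature philosophy: when the Hodge filtration has a single jump, the $p$-curvature of the Gauss--Manin connection is shown to vanish for almost all~$p$, and Katz's theorem proving the Grothendieck conjecture for connections of geometric origin then yields finite monodromy. Your argument takes the other classical road: Hodge--Riemann positivity collapses to a definite symmetric form $Q$ on the single bidegree $(n,n)$, the monodromy sits inside $GL(V_{\mathbb Z})\cap O(V_{\mathbb R},Q)$, which is discrete-in-compact and hence finite, and then the period domain with only one nonzero Hodge number is a point, so the filtration is automatically constant on the cover trivializing monodromy. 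Both routes are valid here; yours is more elementary in the sense that it avoids the arithmetic machinery of $p$-curvature and the Grothendieck--Katz theorem, but it leans essentially on the polarizability of $\mathrm{gr}^W_{2n}H^{2n}$ (which the paper supplies in the paragraph before the theorem) and on the existence of a monodromy-invariant integral (or rational) structure, whereas the $p$-curvature approach is designed to function even in settings where a polarization is not available. Your treatment of the two delicate points you flag --- the sign in the Hodge--Riemann relation in bidegree $(n,n)$, which is $i^{n-n}=1$, and algebraicity of the finite analytic cover via Riemann existence --- is handled correctly.
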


\begin{proof}This is \cite[Proposition~4.2.1.3]{K}.
\end{proof}

\begin{corollary} On a finite etale connected covering $\E_1/\E$, $W_{2n}H^{2n}_{DR}(\X_{\E}/\E)$ with its Gauss-Manin connection becomes isomorphic to the trivial $\sD$-module $(\sO_{\E_1}^r, d)$, for $r := \rank \,W_{2n}H^{2n}_{DR}(\X_{\E}/\E)$.
\end{corollary}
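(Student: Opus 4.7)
The plan is to read Corollary 9.3 as the $\sD$-module translation of the statement ``constant variation of pure Hodge structure'' provided by Theorem 9.1. First I would invoke Theorem 9.1 directly: after pulling back to a suitable finite \'etale connected covering $\E_1/\E$, the variation $W_{2n}H^{2n}_{DR}(\X_\E/\E)$ becomes a constant variation of pure Hodge structure. By definition, this means that the underlying analytic local system of flat sections on $\E_1^{\rm an}$ is the constant local system of rank $r=\rank W_{2n}H^{2n}_{DR}(\X_\E/\E)$; equivalently, the monodromy representation $\pi_1(\E_1^{\rm an})\to GL_r(\C)$ is trivial.

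Next I would translate this triviality of the monodromy into triviality of the algebraic $\sD$-module. The Gauss-Manin connection on $W_{2n}H^{2n}_{DR}(\X_\E/\E)$ has regular singular points (along the boundary of $\E$ inside a smooth compactification), so the Riemann-Hilbert correspondence together with GAGA identifies horizontal sections in the analytic category with horizontal sections in the algebraic category after \'etale base change. From Step 1 we have $r$ linearly independent analytic horizontal sections on $\E_1^{\rm an}$; these therefore descend to $r$ linearly independent algebraic horizontal sections $s_1,\dots,s_r$ of $W_{2n}H^{2n}_{DR}(\X_\E/\E)\otimes_{\sO_\E}\sO_{\E_1}$.

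Finally I would argue that the $\sO_{\E_1}$-linear map $\sO_{\E_1}^r\to W_{2n}H^{2n}_{DR}(\X_\E/\E)\otimes_{\sO_\E}\sO_{\E_1}$ sending the $i$-th standard basis vector to $s_i$ is a morphism of $\sD$-modules (because the $s_i$ are horizontal), and is an isomorphism: it is a map between locally free $\sO_{\E_1}$-modules of the same rank $r$, and it is surjective at every closed point because the $s_i$ remain linearly independent at each fiber (their linear independence is preserved by the flat structure on a connected base). This yields the desired isomorphism with $(\sO_{\E_1}^r,d)$.

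The main obstacle is the passage from the Hodge-theoretic input ``constant variation of pure Hodge structure'' to the purely $\sD$-module statement about an algebraic trivialization; concretely this requires invoking the Riemann-Hilbert correspondence for regular holonomic $\sD$-modules together with GAGA to ensure that the analytic horizontal frame obtained from Theorem 9.1 is in fact algebraic on $\E_1$. Once this identification is in hand, the rest is a direct rank count and the fact that a flat connection with a full set of horizontal sections on a connected base is trivial.
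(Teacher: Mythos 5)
The paper offers no explicit proof of this corollary, treating it as an immediate consequence of Theorem~9.1: a \emph{constant} variation of pure Hodge structure has, by definition, the trivial flat connection as its de Rham incarnation, so pulling back to the finite \'etale covering on which the variation becomes constant gives the trivial $\sD$-module directly. Your proof is correct and reaches the same conclusion, but it takes the longer route of first extracting the triviality of the monodromy representation and then invoking the Riemann--Hilbert correspondence (for regular-singular connections, via Deligne) to convert the analytic horizontal frame into an algebraic one; this is a legitimate unpacking, though strictly more than is needed given how Katz's notion of ``constant'' is already algebraic. One small imprecision: GAGA is not really the relevant ingredient on the affine base $\E_1$ — the algebraicity of the horizontal sections is exactly the content of the Riemann--Hilbert correspondence for regular-singular connections (which itself uses compactification and Deligne's canonical extension), so the appeal to GAGA should be folded into that citation rather than invoked as a separate tool.
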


\begin{corollary} Over a finite extension field $L/\C(\lambda)$, the Gauss-Manin connection on $W_{2n}H^{2n}_{DR}(X/\C(\lambda))$ becomes isomorphic to the  trivial $\sD$-module $(\C(\lambda)^r, d)$, for $r:=\dim W_{2n}H^{2n}_{DR}(X/\C(\lambda)$. In down to earth terms, 
all solutions of the Picard-Fuchs equations for  $W_{2n}H^{2n}_{DR}(X/\C(\lambda))$ with its Gauss-Manin connection are algebraic functions of the $\lambda$'s.
\end{corollary}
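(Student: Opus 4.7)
The plan is to deduce the statement from Corollary 9.2 by passing to generic fibres. Concretely, Corollary 9.2 provides a finite \'etale connected covering $\mathcal{E}_1/\mathcal{E}$ and an isomorphism of $\mathcal{D}$-modules on $\mathcal{E}_1$
\[
W_{2n}H^{2n}_{DR}(\mathcal{X}_\mathcal{E}/\mathcal{E})|_{\mathcal{E}_1}\;\cong\;(\sO_{\mathcal{E}_1}^r,d),
\]
where $r=\rank\,W_{2n}H^{2n}_{DR}(\mathcal{X}_\mathcal{E}/\mathcal{E})$. I would take $L$ to be the function field of $\mathcal{E}_1$.

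First I would verify that $L$ is a finite extension of $\C(\lambda)$. Since $U\subset\A^N/\C$ is a dense open, its function field is $\C(\lambda)$; the composition $\mathcal{E}_1\to\mathcal{E}\to U$ is finite \'etale as a composition of finite \'etale morphisms, hence on function fields gives a finite separable extension $L/\C(\lambda)$ (this is automatic since we are in characteristic zero).

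Next I would pass from the sheaf-theoretic statement on $\mathcal{E}_1$ to the generic-fibre statement. Taking stalks at the generic point of $\mathcal{E}_1$ of the displayed isomorphism above yields an isomorphism of $\mathcal{D}$-modules over $L$
\[
W_{2n}H^{2n}_{DR}(\mathcal{X}_\mathcal{E}/\mathcal{E})_\eta\otimes_{K}L\;\cong\;(L^r,d).
\]
Here $K$ is the function field of $\mathcal{E}$ and the left-hand stalk is the Gauss-Manin $\mathcal{D}$-module for the family $\mathcal{X}_\mathcal{E}/\mathcal{E}$ at its generic point. Because $\mathcal{X}$ is the given family over (an open subset of) $\A^N/\C$ and $X/\C(\lambda)$ is its generic fibre, compatibility of cohomology with flat base change identifies this stalk with $W_{2n}H^{2n}_{DR}(X/\C(\lambda))\otimes_{\C(\lambda)}K$ equipped with its Gauss-Manin connection. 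Combining the two,
\[
W_{2n}H^{2n}_{DR}(X/\C(\lambda))\otimes_{\C(\lambda)}L\;\cong\;(L^r,d)
\]
as $\mathcal{D}$-modules. The down-to-earth reformulation is immediate: choose any $\C(\lambda)$-basis of $W_{2n}H^{2n}_{DR}(X/\C(\lambda))$, express the Gauss-Manin connection in it as a Pfaffian system as in~(3.1), and note that the triviality above exhibits a full matrix of horizontal sections with entries in $L$. These entries form a fundamental solution matrix, and each entry is algebraic over $\C(\lambda)$ since $L/\C(\lambda)$ is finite.

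The main point to be careful about is the identification of the stalk of the relative de Rham cohomology sheaf with the de Rham cohomology of the generic fibre; this is the flat base change statement that justifies interpreting ``trivial on $\mathcal{E}_1$'' as ``trivial over the function field $L$''. Once that is in place, the rest is formal.
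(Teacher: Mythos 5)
Your proof is correct and takes exactly the approach the paper intends (the paper offers no written proof of Corollary 9.3, which is a direct consequence of Corollary 9.2 by passing to generic fibres). Taking $L$ to be the function field of $\E_1$, checking that $\E_1\to\E\to U$ is finite \'etale so that $L/\C(\lambda)$ is finite, and then taking stalks at the generic point with flat base change to identify $W_{2n}H^{2n}_{DR}(\X_\E/\E)_\eta$ with $W_{2n}H^{2n}_{DR}(X/\C(\lambda))\otimes_{\C(\lambda)}K$ is the standard argument; the appearance of $(\C(\lambda)^r,d)$ rather than $(L^r,d)$ in the corollary's statement is a typo, and you correctly read it as the latter.
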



\begin{thebibliography}{99}

\bibitem{AS2} A. Adolphson and S. Sperber.  On twisted de Rham cohomology.  Nagoya Math.\ J. {\bf 146} (1997), 55--81.

\bibitem{AS1} A. Adolphson and S. Sperber.  Dwork cohomology, de Rham cohomology, and hypergeometric functions.  Amer.\ J. Math.\ {\bf 122} (2000), no.\ 2, 319--348.

\bibitem{AS} A. Adolphson and S. Sperber.  On the $p$-integrality of $A$-hypergeometric series.  Available at 
{\tt arXiv:1311.5252}.

\bibitem{AS3} A. Adolphson and S. Sperber.  On logarithmic solutions of $A$-hypergeometric systems.  Available at {\tt arXiv:1402.5173}.

\bibitem{AS5} A. Adolphson and S. Sperber.  On the integrality of factorial ratios and mirror maps.  Available at {\tt arXiv:1802.08348}

\bibitem{BD} F. Baldassarri and B. Dwork.  On second order linear differential equations with algebraic solutions. Amer.\ J. Math.\ {\bf 101}  (1979), no.\ 1, 42--76. 

\bibitem{B} V. Batyrev.  Variations of the mixed Hodge structure of affine hypersurfaces in algebraic tori.  Duke Math.\ J. {\bf 69} (1993), no.\ 2, 349--409.

\bibitem{Be} F. Beukers.  Algebraic $A$-hypergeometric functions. Invent.\ Math.\  {\bf 180}  (2010), no.\ 3, 589--610. 

\bibitem{BH} F. Beukers and G. Heckman.  Monodromy for the hypergeometric function $_nF_{n-1}$. Invent.\ Math.\ {\bf 95}  (1989), no.\ 2, 325--354. 

\bibitem{Bo} J. Bober.  Factorial ratios, hypergeometric series, and a family of step functions. J. Lond.\ Math.\ Soc.\ (2)  {\bf 79} (2009), no.\ 2, 422--444. 

\bibitem{De-HodgeII} P. Deligne.  Th\'{e}orie de Hodge. II. Inst.\ Hautes \'{E}tudes Sci.\ Publ.\ Math.\ No.\ 40 (1971), 5--57. 

\bibitem{D} B. Dwork.  On the zeta function of a hypersurface. II. Ann.\ of Math.\ (2)  {\bf 80}  (1964), 227--299.

\bibitem{H} H. Hironaka.  Resolution of singularities of an algebraic variety over a field of characteristic zero. I, II. Ann.\ of Math.\ (2) {\bf 79} (1964), 109--203; ibid.\ (2) {\bf 79} (1964), 205--326.

\bibitem{K} N. Katz.  Algebraic solutions of differential equations ($p$-curvature and the Hodge filtration).  Invent.\ Math.\  {\bf 18}  (1972), 1--118. 

\bibitem{RV} F. Rodriguez-Villegas.  Integral ratios of factorials and algebraic hypergeometric functions.  Available at {\tt arXiv:math/0701362}.

\bibitem{SST} M. Saito, B. Sturmfels, and N. Takayama. Gr\"obner deformations of hypergeometric differential equations. Algorithms and Computation in Mathematics,~6. Springer-Verlag, Berlin, 2000.

\bibitem{S} H. Schwarz.  Ueber diejenigen F\"alle, in welchen die Gaussichen hypergeometrische Reihe eine algebraische Function ihres vierten Elementes darstellt.  Journal f.\ d. reine und angew.\ Math.\ {\bf 75} (1873), 292--335. 

\end{thebibliography}
\end{document}